\documentclass{amsart}
\usepackage{amssymb}
\usepackage{stmaryrd} 
\usepackage{amsmath} 
\usepackage{amscd}
\usepackage{amsthm}
\usepackage{amsbsy}
\usepackage{tikz}
\usetikzlibrary{arrows,positioning}
\usepackage[margin=1in]{geometry}
\usepackage{commath, longtable}
\usepackage{comment, enumerate}
\usepackage{geometry}
\usepackage[matrix,arrow]{xy}
\usepackage{hyperref}
\usepackage{mathrsfs}
\usepackage{color}
\usepackage{float}
\usepackage{mathtools,caption}
\usepackage[table,dvipsnames]{xcolor}
\usepackage{tikz-cd}
\usetikzlibrary{shapes.geometric,calc,angles}
\usetikzlibrary{decorations.markings, arrows.meta}
\tikzset{
mid arrow/.style={postaction={decorate, decoration={
markings,
mark=at position .5 with {\arrow{Straight Barb}}
}}},
}
\tikzset{
dot/.style = {circle, fill, minimum size=#1,
              inner sep=0pt, outer sep=0pt},
dot/.default = 2pt 
}
\usepackage{asymptote}

\usepackage{listings}
\usepackage{longtable}
\usepackage[utf8]{inputenc}
\usepackage[OT2,T1]{fontenc}
\usepackage[normalem]{ulem}
\usepackage{hyperref}
\usepackage[customcolors]{hf-tikz}
\usepackage{enumitem}
\newlist{primenumerate}{enumerate}{1}
\setlist[primenumerate,1]{label={\arabic*$'$}}
\hypersetup{
 colorlinks=true,
 linkcolor=DarkOrchid,
 filecolor=blue,
 citecolor=olive,
 urlcolor=orange,
 pdftitle={WIN7 project},
 }
\usepackage{booktabs}

\DeclareSymbolFont{cyrletters}{OT2}{wncyr}{m}{n}
\DeclareMathSymbol{\Sha}{\mathalpha}{cyrletters}{"58}

\newcommand{\CS}[1]{\textcolor{BlueGreen}{\textsf{CS: #1}}}

\newtheorem{theorem}{Theorem}[section]
\newtheorem{lemma}[theorem]{Lemma}

\newtheorem{proposition}[theorem]{Proposition}

\newtheorem{definition}[theorem]{Definition}

\newtheorem*{theorem*}{Theorem}
\newtheorem*{conjecture*}{Defect Conjecture}
\numberwithin{equation}{section}

\theoremstyle{remark}
\newtheorem{remark}[theorem]{Remark}
\newtheorem{example}[theorem]{Example}
\newtheorem{nexample}[theorem]{Non-example}

\newcommand{\GL}{\operatorname{GL}}
\newcommand{\BF}{\mathcal{BF}}

\newcommand{\Div}{\operatorname{Div}}

\newcommand{\Gal}{\operatorname{Gal}}
\newcommand{\val}{\operatorname{val}}

\newcommand{\ord}{\operatorname{ord}}
\newcommand{\cyc}{\operatorname{cyc}}
\newcommand{\Char}{\operatorname{char}}

\newcommand{\Id}{\operatorname{Id}}
\newcommand{\rk}{\operatorname{rank}}
\newcommand{\coker}{\operatorname{coker}}

\newcommand{\In}{\operatorname{In}}
\newcommand{\Out}{\operatorname{Out}}
\newcommand{\Deck}{\operatorname{Deck}}

\newcommand{\xdownarrow}[1]{%
  {\left\downarrow\vbox to #1{}\right.\kern-\nulldelimiterspace}
}
\newcommand{\lda}{\xdownarrow{0.5cm}\vspace{0.05cm}}

\newcommand{\C}{\mathbb{C}}
\newcommand{\Z}{\mathbb{Z}}
\newcommand{\Q}{\mathbb{Q}}

\makeatletter
\theoremstyle{plain} 
\newtheorem*{intr@thm}{\intr@thmname}

\newtheorem*{c@njecture}{\conjn@name}
\newcommand{\myl@bel}[2]{
 \protected@write \@auxout {}{\string \newlabel {#1}{{#2}{\thepage}{#2}{#1}{}} }
 \hypertarget{#1}{}
 } 

\makeatother

\makeatletter
\newcommand{\mylabel}[2]{#2\def\@currentlabel{#2}\label{#1}}
\makeatother

\title[]{Iwasawa Theory of (Directed) Cayley Graphs}

\author[D.~Kundu]{Debanjana Kundu}
\address[DK]{Department of Mathematics and Statistics\\ University of Regina \\ 3737 Wascana Pkwy\\ Regina, SK\\ Canada  S4S 0A2}
\email{debanjana.kundu@uregina.ca}

\author[K.~M\"uller]{Katharina M\"uller}
\address[KM]{Institut für Theoretische Informatik, Mathematik und Operations Research, Universität der Bundeswehr München, Werner-Heisenberg-Weg 39, 85577 Neubiberg, Germany}
\email{katharina.mueller@unibw.de}

\author[A.~Newton]{Alexis Newton}
\address[AN]{Department of Mathematical Sciences\\ High Point University\\ 1 University Pkwy\\ High Point, NC 27262\\ United States of America}
\email{anewton@highpoint.edu}

\author[C.~Stewart]{Chloe Stewart}
\address[CS]{Department of Mathematics \\ Colorado State University\\ 1874 Campus Delivery\\
Fort Collins, CO 80523-1874\\ United States of America}
\email{chloe.stewart@colostate.edu}

\author[Y.~Wang]{Yidi Wang}
\address[YW]{Department of Mathematics, University of Haifa, 199 Abba Khoushy Avenue, Haifa, Israel}
\email{yidiwang.math@gmail.com}

\keywords{}
\subjclass[2020]{Primary 11R23, 05C05, 05C10, 05C60, 05C63}

\begin{document}

\begin{abstract}
In this article, we prove the Defect Conjecture for Cayley graphs of abelian groups, dihedral groups, and groups of the form $\Z/p\Z \rtimes \Z/(p-1)\Z$ where $p\geq 3$ is a prime.
We also compute the Iwasawa invariants of the Bowen--Franks groups associated with these graphs in several cases.
\end{abstract}

\maketitle

\section{Introduction}

Iwasawa theory is the study of objects of arithmetic interest over infinite towers of number fields.
It began as a Galois module theory of ideal class groups, initiated by K.~Iwasawa \cite{Iwa59}.
It was extended to the study of abelian varieties in the early 1970's by B.~Mazur \cite{Maz72} and in 1990's to the theory of motives by R.~Greenberg \cite{Gre89, Gre91}.
Within the last decade, the theory has been extended to  (unramified and ramified) coverings of finite connected graphs; see for example \cite{gonet2022iwasawa, Val21, dubose-vallieres, MV23, kataoka2023fitting, MV24, DLRV_Nagoya, Kleine-Mueller4, GV24, kleine2023non, KM26}.
Most recently, A.~Lei and the second-named author initiated a systematic study of Iwasawa theory of directed graphs in \cite{lei2026iwasawa} by studying the Bowen--Franks group of the graph.
In this paper, we further investigate the Iwasawa theory of special classes of directed graphs called Cayley graphs and their associated Bowen--Franks groups.
In particular, a main focus of this article is to provide evidence towards the following conjecture of Lei--M{\"u}ller \cite[Conjecture~7.11]{lei2026iwasawa}.
For precise definitions of terms used in the following conjecture, we refer the reader to Sections~\ref{Iwasawa theory} and \ref{Bowen Franks}.

\begin{conjecture*}
Let $(X_m)_m$ be a constant $\Z_p$-tower of strongly connected digraphs.
The defect $\delta$ is constant throughout the $\Z_p$-tower.    
\end{conjecture*}

\subsection{Main Results}
In this paper, we focus our attention on studying the following classes of Cayley graphs
\begin{enumerate}[label = \textup{(}\roman*\textup{)}]
    \item Cayley graphs of finite abelian groups.
    \item Cayley graphs of semi-direct products (e.g., dihedral groups and $\Z/p\Z \rtimes \Z/(p-1) \Z$).
\end{enumerate}
We prove the Defect Conjecture for the above classes of Cayley graphs; see Theorems~\ref{thm: defect conj for dihedral} and \ref{thm: defect conj for abelian}.
In fact, we are able to show that for our classes of examples the defect is in fact 0, throughout the tower.

In addition, we make modest progress towards other questions of interest in Iwasawa theory.
For Cayley graphs of dihedral groups, we study the size and structure of the Bowen--Franks group of the base graph (when finite); see Section~\ref{sec: size and structure}.
In this setting, we also calculate the Iwasawa invariants at all the primes of the Bowen--Franks groups; see Propositions~\ref{prop 3.6} and \ref{prop 3.12}.
The calculations for the Cayley graphs of dihedral groups can be extended in a straightforward manner to groups of the form $\Z/p\Z \rtimes \Z/(p-1)\Z$; see Section~\ref{sec: semi direct}.

Even though we prove concrete and unconditional results in this paper towards the Defect Conjecture and can also calculate Iwasawa invariants, it is pertinent to acknowledge that we were able to predict the correct shape of these results by performing extensive SAGE calculations.
For the benefit of the reader and researchers intending to extend these calculations, we have included the SAGE code in the Appendix. 

\subsection{Outlook}
We have calculated the Iwasawa invariants for cyclic digraphs but have refrained from performing these calculations for more general Cayley graphs of (finite) abelian groups in this article.
Although the calculations are not expected to be difficult, they are tedious and involve careful bookkeeping when working in full generality.
This is being worked out in a separate ongoing project by one of the authors.
In the future, we intend to extend these results to Cayley graphs of other families of non-abelian groups.
Our results can be extended to unramified and ramified $\Z^d_p$-extensions.

\subsection{Organization}
This paper has been written for a reader who is not an expert in Iwasawa theory and we have also tried to make it as self-contained as possible.
In Section~\ref{sec: preliminaries}, we record and explain the basics of Cayley graphs, Galois covers of graphs, construction of $\Z_p$-extensions of base graphs, foundational results in Iwasawa theory, and preliminaries on Bowen--Franks groups. 
We provide proofs of results which are possibly known to experts but may not have been written down explicitly.
We include our main results in Section~\ref{classesofexamples} which itself is divided into four subsections.
We start with Cayley graphs of cyclic groups (these are cyclic digraphs) as a warm-up exercise; we prove the Defect Conjecture and calculate the Iwasawa invariants.
Next, we prove the results for dihedral groups in detail and then extend the results to $\Z/p\Z \rtimes \Z/(p-1)\Z$.
We prove the Defect Conjecture for all abelian groups in Section~\ref{sec: abelian}.
The SAGE code is provided in Appendix~\ref{sage code}.

\subsection*{Acknowledgements}
This project started at the Women in Numbers 7 workshop hosted at the BIRS, Banff in June 2026.
We thank the organizers for making this collaboration possible, the sponsors for their financial support, and BIRS for providing a stimulating environment during this research.
DK acknowledges the support of NSERC Discovery grants RGPIN-2026-07384 and DGECR-2026-00254.
KM's trip to Banff was partially supported by a PIMS--Simons Travel Award.
CS acknowledges partial support from the Cecilia S. Miranda and Henry A. Miranda, Jr. Mathematics Fellowship.
\section{Preliminaries}
\label{sec: preliminaries}

\subsection{Directed Graphs and their Galois Coverings}

In this section, we review the basic theory of directed graphs, morphisms, and coverings following \cite[Section~2]{lei2026iwasawa}.
\begin{definition}
A \emph{directed graph} (also called a \emph{digraph}) $X$ consists of a set of vertices $V(X)$ and a set of (directed or oriented) edges $E(X)$, together with an incidence map
\begin{align*}
\operatorname{inc}:E(X) &\longrightarrow V(X) \times V(X) \\
e & \longmapsto \operatorname{inc}(e) = (o(e),t(e)),
\end{align*}
where $o(e)$ is the origin (or source) and $t(e)$ is the terminus (or target) of the directed edge $e$, respectively.  

Given a vertex $v\in V(X)$, define the sets
\begin{align*}
    \In(v)&=\{e\in E(X):t(e)=v\},\\
    \Out(v)&=\{e\in E(X):o(e)=v\}. 
\end{align*}
The \emph{in-degree} and \emph{out-degree} of $v$ are defined as the cardinality of $\In(v)$ and $\Out(v)$, respectively.
\end{definition}

\begin{definition}
Let $X$ and $Y$ be two digraphs.
\begin{enumerate}[label = \textup{(}\roman*\textup{)}]
\item A \emph{morphism} of digraphs $f:Y \to X$ is a map that induces two maps 
\[ V(Y) \longrightarrow V(X) \text{ and } E(Y) \longrightarrow E(X)
\]
that commute with the incidence map.
\item  A bijective morphism between $X$ and $Y$ is called an \emph{isomorphism}, and is denoted by $X\cong Y$.
\item If there is a surjective digraph morphism $f:Y \to X$ that induces bijections from $\In(v)$ (resp. $\Out(v)$) to $\In(f(v))$ (resp. $\Out(f(v))$) for every $v\in V(Y)$, then $Y/X$ is called a \emph{covering}.
 \end{enumerate}
\end{definition}

\begin{definition}
Let $Y/X$ be a covering of directed graphs with the projection map $\pi:Y\to X$.
\begin{enumerate}[label = \textup{(}\roman*\textup{)}]
\item The group of \emph{deck transformations} of $Y/X$, denoted by $\Deck(Y/X)$, is the group of digraph automorphisms $\sigma:Y\to Y$ such that $\pi\circ\sigma=\pi$.
\item A covering $Y/X$ is said to be \emph{$d$-sheeted} if $d$ is a positive integer such that each element of $V(X)$ has $d$ pre-images in $V(Y)$.
\item The covering $Y/X$ is called \emph{Galois}  if there exists an integer $d$ such that $Y/X$ is a $d$-sheeted covering and $\vert \textup{Deck}(Y/X)\vert =d$.
\end{enumerate}
\end{definition}

We explain this notion via an example and a non-example below.

\begin{example}
Consider the base graph $X=C_3$ which is the (directed) cycle graph with three vertices $v_1 \to v_2 \to v_3 \to v_1$. 
Let $Y=C_6$ be the (directed) cycle graph with six vertices $w_1\to w_2 \to w_3 \to w_4 \to w_5 \to w_6 \to w_1$.
We can define the map $\pi: Y\to X$ by $\pi(w_i) = v_{i \pmod{3}}$.
Then we have $\pi^{-1}(v_1) = \{w_1, w_4\}$, $\pi^{-1}(v_2) = \{w_2, w_5\}$, and $\pi^{-1}(v_3)=\{w_0, w_3\}$.
In particular, $Y$ is a 2-sheeted cover of $X$.
A deck transformation is completely determined by where it sends its vertices.
If we consider $\sigma: Y\to Y$ defined via $w_i \mapsto w_{i+3 \pmod{6}}$, then $\pi \circ \sigma = \pi$.
The other possibility of $\sigma$ is the identity map.
Thus $\Deck(Y/X)\simeq \Z/2\Z$.
Thus, $Y/X$ is a Galois covering. 
\end{example}

\begin{nexample}
Consider the following covering

\begin{center}
\begin{tikzpicture}[scale=0.75]

\node[inner sep=0pt, label = left:\tiny{$v$}] (A) at (0,0) {};

\fill (0,0) circle (1.5pt);

\draw[thick, mid arrow, blue]
(A) .. controls (-1,1) and (1,1) .. 
node[midway, left] {} (A);

\draw[thick, mid arrow, red]
(A) .. controls (-1,-1) and (1,-1) .. 
node[midway, right] {} (A);

\end{tikzpicture}
\hspace{1 cm}
\begin{tikzpicture}[scale=0.75]

\node[inner sep=0pt, label = below:\tiny{$v_1$}] (A1) at (-2,0) {};
\node[inner sep=0pt, label = below:\tiny{$v_2$}] (A2) at (0,0) {};
\node[inner sep=0pt, label = below:\tiny{$v_3$}] (A3) at (2,0) {};

\fill (-2,0) circle (1.5pt);
\fill (0,0) circle (1.5pt);
\fill (2,0) circle (1.5pt);

\draw[thick, mid arrow, red]
(A1) .. controls (-2.8,1) and (-1.2,1) .. (A1);

\draw[thick, mid arrow,blue]
(A1) to[bend left=20] (A2);

\draw[thick, mid arrow, blue]
(A2) to[bend left=20] (A1);

\draw[thick, mid arrow, red]
(A2) to[bend left=20] (A3);

\draw[thick, mid arrow, red]
(A3) to[bend left=20] (A2);

\draw[thick, mid arrow, blue]
(A3) .. controls (1.2,1) and (2.8,1) .. (A3);
\end{tikzpicture}
\end{center}
Here $X$ is the graph with a single vertex $v$ and two edges.
The graph $Y$ has three vertices and corresponding edges.
Note that the projection map $\pi$ sends every red (resp. blue) segment or loop to the red (resp. blue) loop respecting the given orientation.
Thus, $Y$ is a $3$-sheeted cover of $X$.

The three vertices in $Y$ are different from each other.
Indeed, $v_1$ has one red loop and two blue segments, $v_2$ has two blue segments and two red segments, and $v_3$ has one blue loop and two red segments. 
A deck automorphism must preserve these three vertices, but this can be done only by the identity.
\end{nexample}

\subsection{Cayley Graphs}

A Cayley graph encodes the abstract structure of a group.
Let $G$ be a group and $S$ be a generating set of $G$.
The Cayley graph $X=X(G,S)$ is a directed graph constructed as follows:
\begin{itemize}
    \item Each element $g\in G$ is assigned a vertex, and the vertex set $V(X)$ of the graph is identified with $G$.
    \item For every $g\in G$ and $s\in S$, there is a directed edge from the vertex corresponding to $g$ to the one corresponding to $g\cdot s$.
\end{itemize}
The Cayley graph $X=X(G,S)$ depends in an essential way on the choice of the set $S$ of generators. 

\begin{example}
The dihedral group $D_{2n}$ (of order $2n$) has two different presentations, namely
\begin{align*}
    \text{standard presentation given by } & \langle \sigma, \tau \mid \sigma^n = \tau^2 = e, \ \tau \sigma \tau = \sigma^{-1}\rangle \\ 
    \text{Coxeter presentation given by } & \langle s,t \mid s^2 = t^2 = (st)^n = e\rangle.
\end{align*}
In the standard presentation, $\sigma$ denotes the rotation by $2\pi/n$ radians and $\tau$ denotes the reflection.
But, a dihedral group can also be generated by two reflections.
Indeed, if $s$ and $t$ are two reflections of an $n$-gon across adjacent axes of symmetry (i.e., axes incident at $\pi/n$ radians), then $st$ is a rotation by $2\pi/n$.
Here, we take $s = \sigma\tau$ and $t = \tau$ in $D_{2n}$; and it is clear that $st = (\sigma\tau)\tau = \sigma$.
Figures~\ref{fig 1} and \ref{fig 2} show the Cayley graphs of $D_6$ with respect to these two presentations.

\begin{center}
\begin{figure}[h!]
\begin{tikzpicture} [%
    nd/.style = {circle,fill=black,text=white,inner sep=1pt},
    tn/.style = {node distance=1pt},
    redarrow/.style={->, red, fill=none,>=stealth},
    blueline/.style={->,blue,fill=none}]


\def\d{1.5} 

\node[nd] (it)  at (0, 1) {};
\node[nd] (ibl) at ( {-\d * cos(60)}, {(-\d * sin(60)) + 1} ) {};
\node[nd] (ibr) at ( {\d * cos(60)},  {(-\d * sin(60)) + 1}) {};

\node[nd] (ot)  at (0, 2*\d) {};
\node[nd] (obl) at ( {-2 * \d * cos(30)}, {-2 * \d * sin(30)} ) {};
\node[nd] (obr) at ( {2 * \d * cos(30)},  {-2 * \d * sin(30)} ) {};

    \draw[redarrow] (ot) -- (obl);
    \draw[redarrow] (obl) -- (obr);
    \draw[redarrow] (obr) -- (ot); 
    \draw[redarrow] (it) -- (ibr);
    \draw[redarrow] (ibr) -- (ibl);
    \draw[redarrow] (ibl) -- (it); 

    \draw[blueline] (ibl) to[bend left=15] (obl);
    \draw[blueline] (obl) to[bend left=15] (ibl);
    \draw[blueline] (it) to[bend left=15] (ot);
    \draw[blueline] (ibr) to[bend left=15] (obr);
    \draw[blueline] (ot) to[bend left=15] (it);
    \draw[blueline] (obr) to[bend left=15] (ibr);
    
    \node[tn] [below=of it] {\tiny{$\sigma$}};
    \node[tn] [below=of ibr] {\tiny{$\sigma^2$}};
    \node[tn] [below=of ibl] {\tiny{$e$}};

    \node[tn] [below left=of obl] {\tiny{$\tau$}};
    \node[tn] [below=of obr] {\tiny{$\tau \sigma = \sigma^2 \tau$}};
    \node[tn] [above=of ot] {\tiny{$\sigma \tau =  \tau \sigma^2$}};
    
\end{tikzpicture}
\caption{Cayley graph of $D_{6}=  \langle \sigma, \tau \mid \sigma^3 = \tau^2 = e, \tau \sigma \tau = \sigma^{-1}\rangle$}
\label{fig 1}
\end{figure}
\end{center}

\begin{center}
\begin{figure}[h!]
\begin{tikzpicture} [%
    nd/.style = {circle,fill=black,text=white,inner sep=1pt},
    tn/.style = {node distance=1pt},
    redarrow/.style={->, red, fill=none,>=stealth},
    blueline/.style={->,blue,fill=none}]
    
\def\r{2.0} 

\node[nd] (v0) at (0:\r) {};
\node[nd] (v1) at (60:\r) {};
\node[nd] (v2) at (120:\r) {};
\node[nd] (v3) at (180:\r) {};
\node[nd] (v4) at (240:\r) {};
\node[nd] (v5) at (300:\r) {};

\node[tn] [right=of v0] {\tiny{$e$}};
\node[tn] [above right=of v1] {\tiny{$s$}};
\node[tn] [above left=of v2] {\tiny{$st$}};
\node[tn] [left=of v3] {\tiny{$sts$}};
\node[tn] [below left=of v4] {\tiny{$stst$}};
\node[tn] [below right=of v5] {\tiny{$ststs$}};

    \draw[redarrow] (v0) to[bend left=15] (v1);
    \draw[redarrow] (v1) to[bend left=15] (v0);
    \draw[blueline] (v1) to[bend left=15] (v2); 
    \draw[blueline] (v2) to[bend left=15] (v1);
    \draw[redarrow] (v2) to[bend left=15] (v3);
    \draw[redarrow] (v3) to[bend left=15] (v2);
    \draw[blueline] (v3) to[bend left=15] (v4); 
    \draw[blueline] (v4) to[bend left=15] (v3);
    \draw[redarrow] (v4) to[bend left=15] (v5);
    \draw[redarrow] (v5) to[bend left=15] (v4);
    \draw[blueline] (v0) to[bend left=15] (v5); 
    \draw[blueline] (v5) to[bend left=15] (v0);

\end{tikzpicture}
\caption{Cayley graph of $D_{6}=  \langle s,t \mid s^2 = t^2 = (st)^3 = e\rangle$}
\label{fig 2}
\end{figure}
\end{center}
\end{example}

\subsubsection{Representation theory}
\label{cayley and rep theory}
Given a finite group $G$ and a generating set $S$, we want to understand the precise relationship between the eigenvalues of the adjacency matrix $A_X$ of $X(G,S)$ and the irreducible representations of $G$.
Recall the following definition
\begin{definition}
Let $X$ be a finite connected graph with $n$ vertices.
The \emph{adjacency matrix} $A_X = \left[a_{ij}\right]_{1\leq i,j\leq n}$ is an $n\times n$ matrix defined as 
    \[
    a_{ij} \coloneqq 
        \begin{cases}
            \operatorname{mult}(v_i,v_j), &\text{ if } v_i  \text{ and } v_j \text{ are adjacent},\\
            0, & \text{ otherwise},
        \end{cases}
    \]
    where $\operatorname{mult}(v_i,v_j)$ denotes the number of edges from $v_i$ to $v_j$.
\end{definition}    
We start with the complex vector space $\C[G]$ with basis $\{e_g : g\in G\}$.
The regular representation
\[
\rho: G \longrightarrow \GL(\C[G]) \text{ given by } \rho(h)e_g = e_{gh}
\]
extends to 
\[
\rho(h)\left( \sum_{g\in G} c_g e_g\right) = \sum_{g\in G} c_g e_{gh}.
\]
Consider the adjacency operator $A_X$ which is defined on the basis vector $e_g$ via
\[
A_Xe_g = \sum_{x : g\to x} e_x
\]
But in a (directed) Cayley graph the `outgoing' neighbours of $g$ are $\{gs : s\in S\}$.
This means we can define the adjacency operator
\[
A_Xe_g = \sum_{s \in S} e_{gs} = \sum_{s\in S} \rho(s)e_g.
\]
Therefore, the adjacency operator
\[
A_X = \sum_{s\in S} \rho(s)
\]
By Maschke's Theorem we can decompose 
\[
\rho = \bigoplus_{\rho_i \text{ irreducible}} \dim(\rho_i) \rho_i.
\]
Writing $M_{\rho_i}$ to denote the block matrix corresponding to the representation $\rho_i$, we can write
\begin{equation}
\label{adjacency}
A_X \sim \bigoplus_{\rho_i \text{ irreducible}} \dim(\rho_i) M_{\rho_i}
\end{equation}
where the right hand side is a matrix obtained by placing each block matrix $M_{\rho_i}$ along the diagonal.

\subsection{Iwasawa Theory}
\label{Iwasawa theory}
Fix a prime $p$.
For a non-negative integer $m$, write $\zeta_{p^m}$ to denote a primitive $p^m$-th root of unity.
The \emph{cyclotomic} $\Z_p$-extension of $\Q$ is the maximal totally real pro-$p$ subfield of $\bigcup_m \Q(\zeta_{p^m})$, denoted by $\Q_{\cyc}$.
In particular, there is a tower
\[
\Q = \Q_{(0)} \subset \Q_{(1)} \subset \ldots \subset \Q_{(m)} \subset \ldots \subset \Q_{\cyc}
\]
where each $\Q_{(m)}$ is the maximal totally real subfield of degree a power of $p$ in $\Q(\zeta_{p^{m+1}})$ (resp. $\Q(\zeta_{p^{m+2}})$) when $p$ is odd (resp. even) with $\Gal(\Q_{(m)}/\Q)\simeq \Z/p^m\Z$.
By infinite Galois theory,
\[
\Gamma = \Gal(\Q_{\cyc}/\Q) \simeq \varprojlim_m \Z/p^m\Z = \Z_p.
\]
When $K$ is any number field, the cyclotomic $\Z_p$-extension of $K$ is $K_{\cyc} = K \cdot \Q_{\cyc}$; i.e., it is the compositum of $K$ with $\Q_{\cyc}$.
As before, we write $K_{(m)}$ to denote the $m$-th layer of the cyclotomic extension of $K$ and write $\Gamma_m = \Gal(K_{(m)}/K) \simeq \Z/p^m \Z$.
By definition, $\Gal(K_{\cyc}/K)\simeq \Z_p$ as well.
In general, when the number field $K$ is not totally real, there are infinitely many $\Z_p$-extensions of $K$.

A main focus in the study of classical Iwasawa theory is to study the growth of the $p$-part of the class group in each layer of a $\Z_p$-extension of a given base field.
In the 1950's Iwasawa proved that for any $\Z_p$-extension of a number field $K$, there exist non-negative integers $\lambda,\mu$ and an integer $\nu$ (all independent of $m$) such that 
\begin{equation}
\label{Iwasawa formula}
\val_p(|\operatorname{Cl}(K_{(m)})|) = \mu p^m + \lambda m +\nu \text{ when } m\gg0.
\end{equation}
The constants $\lambda$, $\mu$, and $\nu$ depend on the prime $p$, the base field, and on the $\Z_p$-extension.


\subsubsection{Constant \texorpdfstring{$\Z_p$}{}-towers of (di)graphs}

The following section summarizes the construction of constant Galois $\Z_p$-towers of digraphs appearing in \cite{lei2026iwasawa}.

\begin{definition} 
Fix a prime $p$ and a finite connected digraph $X=X_0$.
The sequence of Galois covers 
\[
X=X_0 \longleftarrow X_1 \longleftarrow X_2 \longleftarrow \ldots \longleftarrow X_m \longleftarrow \ldots
\]
is a $\Z_p$-tower, if each composed graph covering $X_m \rightarrow X_0$ has Galois group isomorphic to $\Z/p^m\Z$.
\end{definition}

It follows from infinite Galois theory that $\Gamma = \varprojlim_n\Gal(X_n/X) \simeq \Z_p$ and $\Gal(X_m/X) = \Gamma/\Gamma^{p^m} \simeq \Z/p^m\Z$. 
Next, we construct a $\mathcal{G}$-tower of digraphs for a group $\mathcal{G}$ using a voltage assignment on the edges of a digraph.

\begin{definition} 
Let $X$ be a digraph and $\mathcal{G}$ be a group.
\begin{enumerate}[label = \textup{(}\roman*\textup{)}]
\item A function $\alpha: E(X) \to \mathcal{G}$ is called a \emph{$\mathcal{G}$-valued voltage assignment on $X$}.
\item When $\mathcal{G}$ is finite, the \emph{derived digraph} $X(\mathcal{G}, \alpha)$ has vertices $V(X) \times \mathcal{G}$ and directed edges $E(X) \times \mathcal{G}$.
Moreover, for all $g\in \mathcal{G}$ and $e\in E(X)$ with $\operatorname{inc}(e) = (o, t)$ the incidence map of the derived graph is
\[
\operatorname{inc}(e,g) = ((o,g) , (t, g\cdot \alpha(e)))
\]
\end{enumerate}
\end{definition}

When a $\Z_p$-tower of $X$ arises from a constant voltage assignment $\alpha$, i.e., the image of $\alpha$ consists of a single element and assigns this element to every edge, we refer to it as a \emph{constant $\Z_p$-tower}.
In this case, each finite layer $X_m$ is viewed as a derived graph $X(\Gamma/\Gamma^{p^m}, \alpha_m)$ as follows: let $\theta$ be the topological generator of $\Gamma = \Gal(X_\infty/X) \simeq \Z_p$, the $\Z/p^m\Z$-valued voltage assignment $\alpha_m = \pi_m \circ \alpha$ is given by
\begin{alignat*}{2}
    \alpha_m\colon E(X) & \xrightarrow{\alpha} \Z_p && \xrightarrow{\pi_m} \Z/p^m\Z\\
    e &\mapsto \theta && \longmapsto \theta_m.
\end{alignat*}
Note that the graph $X_m$ is not necessarily connected.

But along the tower, the number of connected components stays uniformly bounded.
Let $m_0$ be the \textit{minimal index} where the number of connected components stabilizes.
Then $X_{m_0}$ consists of $p^{m_0}$ copies of $X$.
Fix one of these connected components and denote it by $X'_0$.
For every $m>m_0$ there is a unique connected component $X'_m$  of $X_m$ lying over $X'_{m_0}$.
In this case $X'_m/X'_{m_0}$ is Galois with Galois group $\Z/p^{m-m_0}\Z$.
Thus, we can just skip the first $m_0$ steps of the tower and restrict to a tower above one connected component.
As the base graph is isomorphic to $X$ one usually calls this restricted tower the constant $\Z_p$-tower of $X$.

\begin{example}
Let $X$ be the cycle graph $C_3$ with three vertices and three edges.
Let $p=3$ and consider the constant voltage assignment assigning the same topological generator $\tau$ to every edge.
The graph $X_1=X(\Z/3\Z,\alpha_1)$ consists of the following three cycles:
\begin{align*}
(v_0,1)\to(v_1,\tau)\to(v_2,\tau^2)\to(v_0,1)\\
(v_0,\tau)\to (v_1,\tau^2)\to(v_3,1)\to (v_0,\tau)\\
(v_0,\tau^2)\to(v_2,1)\to(v_3,\tau)\to (v_0,\tau^2).
\end{align*}
Thus, it consists indeed of three copies of the base graph. The graph $X_2=X(\Z/9\Z,\alpha_2)$ consist again of three cycles but of length $9$. One of which is
\[
(v_0,1) \to (v_1,\tau) \to(v_2,\tau^2) \to(v_0,\tau^3)\to (v_1,\tau^4) \to(v_2,\tau^5)\to(v_0,\tau^6)\to(v_1,\tau^7)\to(v_2,\tau^8).
\]
The graph $X_n=X(\Z/3^n\Z,\alpha_n)$ will consist of $3$ cycles of length $3^{n}$.
More generally, if $X$ is a cycle graph $C_n$ and we fix a prime $p$, then $m_0 = \val_p(n)$.
\end{example}

We explain this with the help of a diagram.
First we start with a directed graph which has three vertices and consider the case that $p=2$.
We draw the first two layers.

\begin{center}
\begin{tikzpicture}[scale=0.7]
\node[inner sep=0pt, label = above:\tiny{$v_1$}] (A) at (0,1.5) {};
\node[inner sep=0pt, label = left:\tiny{$v_2$}] (B) at (-1.5,0) {}; 
\node[inner sep=0pt, label = right:\tiny{$v_3$}] (C) at (1.5,0) {}; 
\node[inner sep=0pt, label = left:\tiny{$\tau$}] (T) at (-0.75,0.75) {};
\node[inner sep=0pt, label = right:\tiny{$\tau$}] (T) at (0.75,0.75) {};
\node[inner sep=0pt, label = below:\tiny{$\tau$}] (T) at (0,0) {};

\fill (0,1.5) circle (1.5pt);
\fill (1.5,0) circle (1.5pt);
\fill (-1.5,0) circle (1.5pt);

\draw[thick, mid arrow] (A.south) to (B.east);
\draw[thick, mid arrow] (C.west) to (A.south);
\draw[thick, mid arrow] (B.east) to (C.west);
\end{tikzpicture}
\hspace{0.75cm}
\begin{tikzpicture}[scale=0.7]
\node[inner sep=0pt, label = left:\tiny{$(v_1,1)$}] (A1) at (-2,0) {};
\node[inner sep=0pt, label = right:\tiny{$(v_1, \tau)$}] (B3) at (2,0) {}; 
\node[inner sep=0pt, label = above:\tiny{$(v_3,\tau)$}] (B2) at (2*-0.5,2*0.866) {};
\node[inner sep=0pt, label = above:\tiny{$(v_2, 1)$}] (A2) at (2*0.5,2*0.866) {}; 
\node[inner sep=0pt, label = below:\tiny{$(v_2,\tau)$}] (B1) at (2*-0.5,2*-0.866) {};
\node[inner sep=0pt, label = below:\tiny{$(v_3,1)$}] (A3) at (2*0.5,2*-0.866) {}; 

\fill (2,0) circle (1.5pt);
\fill (-2,0) circle (1.5pt);
\fill (2*-0.5,2*0.866) circle (1.5pt);
\fill (2*-0.5,-2*0.866) circle (1.5pt);
\fill (2*0.5,2*0.866) circle (1.5pt);
\fill (2*0.5,-2*0.866) circle (1.5pt);

\draw[thick, mid arrow] (A1) to (B1);
\draw[thick, mid arrow] (B2) to (A1);
\draw[thick, mid arrow] (A2) to (B2);
\draw[thick, mid arrow] (B3) to (A2);
\draw[thick, mid arrow] (B1) to (A3);
\draw[thick, mid arrow] (A3) to (B3);
\end{tikzpicture} 
\hspace{0.75cm}
\begin{tikzpicture}[scale=0.7]
\node[inner sep=0pt, label = right:\tiny{$(v_1,1)$}]   (A1)  at (2,0) {};
\node[inner sep=0pt, label = above right:\tiny{$(v_2,\tau)$}]  (A2)  at (2*0.866,2*0.5) {};
\node[inner sep=0pt, label = above right:\tiny{$(v_3,\tau^2)$}]   (A3)  at (2*0.5,2*0.866) {};
\node[inner sep=0pt, label = above:\tiny{$(v_1,\tau^3)$}]  (A4)  at (0,2) {};
\node[inner sep=0pt, label = above left:\tiny{$(v_2,1)$}]   (A5)  at (-2*0.5,2*0.866) {};
\node[inner sep=0pt, label = above left:\tiny{$(v_3,\tau)$}]  (A6)  at (-2*0.866,2*0.5) {};

\node[inner sep=0pt, label = left:\tiny{$(v_1,\tau^2)$}]   (A7)  at (-2,0) {};
\node[inner sep=0pt, label = below left:\tiny{$(v_2,\tau^3)$}]  (A8)  at (-2*0.866,-2*0.5) {};
\node[inner sep=0pt, label = below left:\tiny{$(v_3,1)$}]   (A9)  at (-2*0.5,-2*0.866) {};
\node[inner sep=0pt, label = below:\tiny{$(v_1,\tau)$}] (A10) at (0,-2) {};
\node[inner sep=0pt, label = below right:\tiny{$(v_2,\tau^2)$}]  (A11) at (2*0.5,-2*0.866) {};
\node[inner sep=0pt, label = below right:\tiny{$(v_3,\tau^3)$}] (A12) at (2*0.866,-2*0.5) {};

\fill (2,0) circle (1.5pt);
\fill (2*0.866,2*0.5) circle (1.5pt);
\fill (2*0.5,2*0.866) circle (1.5pt);
\fill (0,2) circle (1.5pt);
\fill (-2*0.5,2*0.866) circle (1.5pt);
\fill (-2*0.866,2*0.5) circle (1.5pt);

\fill (-2,0) circle (1.5pt);
\fill (-2*0.866,-2*0.5) circle (1.5pt);
\fill (-2*0.5,-2*0.866) circle (1.5pt);
\fill (0,-2) circle (1.5pt);
\fill (2*0.5,-2*0.866) circle (1.5pt);
\fill (2*0.866,-2*0.5) circle (1.5pt);

\draw[thick, mid arrow] (A1) to (A2);
\draw[thick, mid arrow] (A2) to (A3);
\draw[thick, mid arrow] (A3) to (A4);
\draw[thick, mid arrow] (A4) to (A5);
\draw[thick, mid arrow] (A5) to (A6);
\draw[thick, mid arrow] (A6) to (A7);
\draw[thick, mid arrow] (A7) to (A8);
\draw[thick, mid arrow] (A8) to (A9);
\draw[thick, mid arrow] (A9) to (A10);
\draw[thick, mid arrow] (A10) to (A11);
\draw[thick, mid arrow] (A11) to (A12);
\draw[thick, mid arrow] (A12) to (A1);
\end{tikzpicture}
\end{center}
We now consider the case that $p=3$.
We once again draw the first two layers.
In this case we observe that each layer (above the base graph) is disconnected.

\begin{center}
\begin{tikzpicture}[scale=0.75]
\node[inner sep=0pt, label = above:\tiny{$v_1$}] (A) at (0,1.5) {};
\node[inner sep=0pt, label = left:\tiny{$v_2$}] (B) at (-1.5,0) {}; 
\node[inner sep=0pt, label = right:\tiny{$v_3$}] (C) at (1.5,0) {}; 
\node[inner sep=0pt, label = left:\tiny{$\tau$}] (T) at (-0.75,0.75) {};
\node[inner sep=0pt, label = right:\tiny{$\tau$}] (T) at (0.75,0.75) {};
\node[inner sep=0pt, label = below:\tiny{$\tau$}] (T) at (0,0) {};

\fill (0,1.5) circle (1.5pt);
\fill (1.5,0) circle (1.5pt);
\fill (-1.5,0) circle (1.5pt);

\draw[thick, mid arrow] (A.south) to (B.east);
\draw[thick, mid arrow] (C.west) to (A.south);
\draw[thick, mid arrow] (B.east) to (C.west);
\end{tikzpicture}
\hspace{1cm}
\begin{tikzpicture}[scale=0.75]

\node[inner sep=0pt, label = right:\tiny{$(v_1,1)$}] (A1) at (2,0) {};
\node[inner sep=0pt, label = above right:\tiny{$(v_2,\tau)$}] (A2) at (2*0.766,2*0.643) {};
\node[inner sep=0pt, label = above:\tiny{$(v_3,\tau^2)$}] (A3) at (2*0.174,2*0.985) {};
\node[inner sep=0pt, label = above left:\tiny{$(v_1,\tau)$}] (A4) at (-2*0.5,2*0.866) {};
\node[inner sep=0pt, label = left:\tiny{$(v_2,\tau^2)$}] (A5) at (-2*0.94,2*0.342) {};

\node[inner sep=0pt, label = left:\tiny{$(v_3,1)$}] (A6) at (-2*0.94,-2*0.342) {};
\node[inner sep=0pt, label = below left:\tiny{$(v_1,\tau^2)$}] (A7) at (-2*0.5,-2*0.866) {};
\node[inner sep=0pt, label = below:\tiny{$(v_2,1)$}] (A8) at (2*0.174,-2*0.985) {};
\node[inner sep=0pt, label = below right:\tiny{$(v_3,\tau)$}] (A9) at (2*0.766,-2*0.643) {};

\fill (2,0) circle (1.5pt);
\fill (2*0.766,2*0.643) circle (1.5pt);
\fill (2*0.174,2*0.985) circle (1.5pt);
\fill (-2*0.5,2*0.866) circle (1.5pt);
\fill (-2*0.94,2*0.342) circle (1.5pt);
\fill (-2*0.94,-2*0.342) circle (1.5pt);
\fill (-2*0.5,-2*0.866) circle (1.5pt);
\fill (2*0.174,-2*0.985) circle (1.5pt);
\fill (2*0.766,-2*0.643) circle (1.5pt);

\draw[thick, mid arrow] (A1) to (A2);
\draw[thick, mid arrow] (A2) to (A3);
\draw[thick, mid arrow] (A3) to (A1);

\draw[thick, mid arrow] (A4) to (A5);
\draw[thick, mid arrow] (A5) to (A6);
\draw[thick, mid arrow] (A6) to (A4);

\draw[thick, mid arrow] (A7) to (A8);
\draw[thick, mid arrow] (A8) to (A9);
\draw[thick, mid arrow] (A9) to (A7);

\end{tikzpicture}
\hspace{1cm}
\begin{tikzpicture}[scale=1.2]

\foreach \i/\lab in {
1/{(v_1,1)},
2/{(v_2,\tau)},
3/{(v_3,\tau^2)},
4/{(v_1,\tau^3)},
5/{(v_2,\tau^4)},
6/{(v_3,\tau^5)},
7/{(v_1,\tau^6)},
8/{(v_2,\tau^7)},
9/{(v_3,\tau^8)},
10/{(v_1,\tau)},
11/{(v_2,\tau^2)},
12/{(v_3,\tau^3)},
13/{(v_1,\tau^4)},
14/{(v_2,\tau^5)},
15/{(v_3,\tau^6)},
16/{(v_1,\tau^7)},
17/{(v_2,\tau^8)},
18/{(v_3,1)},
19/{(v_1,\tau^2)},
20/{(v_2,\tau^3)},
21/{(v_3,\tau^4)},
22/{(v_1,\tau^5)},
23/{(v_2,\tau^6)},
24/{(v_3,\tau^7)},
25/{(v_1,\tau^8)},
26/{(v_2,1)},
27/{(v_3,\tau)}
}
{
    \pgfmathsetmacro{\ang}{90 - 360*(\i-1)/27}
    \node[inner sep=0pt, label={\ang:\tiny{$\lab$}}] (A\i) at (\ang:2.6) {};
    \fill (\ang:2.6) circle (1.5pt);
}

\draw[thick, mid arrow] (A1) to (A2);
\draw[thick, mid arrow] (A2) to (A3);
\draw[thick, mid arrow] (A3) to (A4);
\draw[thick, mid arrow] (A4) to (A5);
\draw[thick, mid arrow] (A5) to (A6);
\draw[thick, mid arrow] (A6) to (A7);
\draw[thick, mid arrow] (A7) to (A8);
\draw[thick, mid arrow] (A8) to (A9);
\draw[thick, mid arrow] (A9) to (A1);

\draw[thick, mid arrow] (A10) to (A11);
\draw[thick, mid arrow] (A11) to (A12);
\draw[thick, mid arrow] (A12) to (A13);
\draw[thick, mid arrow] (A13) to (A14);
\draw[thick, mid arrow] (A14) to (A15);
\draw[thick, mid arrow] (A15) to (A16);
\draw[thick, mid arrow] (A16) to (A17);
\draw[thick, mid arrow] (A17) to (A18);
\draw[thick, mid arrow] (A18) to (A10);

\draw[thick, mid arrow] (A19) to (A20);
\draw[thick, mid arrow] (A20) to (A21);
\draw[thick, mid arrow] (A21) to (A22);
\draw[thick, mid arrow] (A22) to (A23);
\draw[thick, mid arrow] (A23) to (A24);
\draw[thick, mid arrow] (A24) to (A25);
\draw[thick, mid arrow] (A25) to (A26);
\draw[thick, mid arrow] (A26) to (A27);
\draw[thick, mid arrow] (A27) to (A19);

\end{tikzpicture}
\end{center}

\subsubsection{Iwasawa Algebra and the Structure Theorem}

The \emph{Iwasawa algebra} $\Lambda=\Lambda(\Gamma)$ is the completed group algebra $\Z_p\llbracket \Gamma \rrbracket :=\varprojlim_m \Z_p[\Gamma/\Gamma^{p^m}]$.
Fix a topological generator $\theta$ of $\Gamma$; this gives an isomorphism of rings 
\begin{align*}
\Lambda &\xrightarrow{\sim} \Z_p\llbracket T\rrbracket \\
\theta & \mapsto 1+T.
\end{align*}

Let $M$ be a finitely generated torsion $\Lambda$-module.
The \emph{Structure Theorem of $\Lambda$-modules} asserts \cite[Theorem~13.12]{Was97} that $M$ is pseudo-isomorphic to a finite direct sum of cyclic $\Lambda$-modules.
In other words, there is a homomorphism of $\Lambda$-modules
\[
M \longrightarrow \left(\bigoplus_{i=1}^s \Lambda/(p^{m_i})\right)\oplus \left(\bigoplus_{j=1}^t \Lambda/(f_j(T)) \right)
\]
with finite kernel and cokernel.
Here, $m_i>0$ and $f_j(x)$ is a distinguished polynomial (i.e. a monic polynomial with non-leading coefficients divisible by $p$).
The characteristic ideal of $M$ is (up to a unit) generated by the characteristic element/polynomial,
\[
f_{M}^{(p)}(T) := p^{\sum_{i} m_i} \prod_j f_j(T).
\]
The $\mu$-invariant of $M$ is defined as the power of $p$ in $f_{M}^{(p)}(T)$.
More explicitly,
\[
\mu(M) = \mu_p(M):=\begin{cases}0 & \textrm{ if } s=0\\
\sum_{i=1}^s m_i & \textrm{ if } s>0.
\end{cases}
\]
The $\lambda$-invariant of $M$ is the degree of the characteristic element, i.e.
\[
\lambda(M) = \lambda_p(M) := \sum_{j=1}^t \deg f_j(T).
\]

\begin{remark}
The invariants $\lambda,\mu$ appearing in Iwasawa's formula \eqref{Iwasawa formula} are indeed the invariants appearing in the above definition for the Galois module $M = \varprojlim_m \operatorname{Cl}(K_m)[p^\infty]$.
\end{remark}

\subsection{Bowen--Franks Groups}
\label{Bowen Franks}
In this section, $X$ denotes a finite connected graph with $n$ vertices.

\begin{definition}
Let $X$ be a finite connected graph with $n$ vertices.
\begin{enumerate}[label = \textup{(}\roman*\textup{)}]
\item A \emph{divisor} on a graph $X$ is an element of the free abelian group on its vertices $V(X)$,
    \[
        \Div(X) = \left\{\sum_{v\in V(X)} a_vv\mid v\in v(X), a_v\in \Z\right\}
    \]
    where $\displaystyle \sum_{v\in V(X)}a_vv$ is a formal sum of vertices of $X$.
\item The \emph{Bowen--Franks operator} on $\Div(X) \cong \Z^n$ is defined by the matrix $\operatorname{BF}_X = \Id - A_X$, where $A_X$ is the adjacency matrix.
    The \emph{Bowen--Franks group} is defined as 
    \[
        \BF(X) \coloneqq \Div(X)/\operatorname{BF_X\cdot}\Div(X) \cong \Z^n/(\Id-A_X)\Z^n.
    \]    
\end{enumerate}
\end{definition}

The matrix $\Id-A_X$ defines a homomorphism
\[
\Id-A_X:\Div(X)\longrightarrow \Div(X).
\]
By definition,
\[
\coker(\Id-A_X) = \frac{\Div(X)} {(\Id-A_X)\Div(X)}.
\]
So an alternative definition of the Bowen--Franks group is
\[
\BF(X) = \coker(\Id-A_X).
\]

\begin{definition}
Let $X$ be a graph with adjacency matrix $A_X$.
Set
\begin{align*}
a(X) &\coloneqq \text{ the algebraic multiplicity of 1 as an eigenvalue of } A_X \\
b(X) & \coloneqq \text{ the geometric multiplicity of 1 as an eigenvalue of } A_X = \rk_{\Z}(\BF(X)).
\end{align*}
Define $\delta(X) \coloneqq a(X) - b(X)$ as the \emph{defect} of $X$.
\end{definition}

\begin{remark}
In \cite{lei2026iwasawa}, the authors defined $a(X)$ in terms of the order of vanishing of a certain zeta function.
However, as explained in Example~7.2 of \textit{loc.~cit.} it follows from their proof of Lemma~4.7 that the two definitions are in fact equivalent.
\end{remark}

\begin{lemma}
\label{BFX infinite}
Let $X$ be a graph with adjacency matrix $A_X$.
The Bowen--Franks group $\BF(X)$ is infinite if and only if $\det(\Id-A_X) =0$.
If $\BF(X)$ is finite, then $\abs{\BF(X)} = \abs{\det(\Id-A_X)}$.
\end{lemma}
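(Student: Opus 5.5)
The plan is to use the Smith normal form of the integer matrix $M=\Id-A_X\in M_n(\Z)$. Since $\Z$ is a principal ideal domain, there are matrices $P,Q\in \GL_n(\Z)$ with
\[
PMQ=D=\operatorname{diag}(d_1,\dots,d_r,0,\dots,0),
\]
where $d_1,\dots,d_r$ are nonzero integers and $r=\rk M$. Because $P$ and $Q$ are invertible over $\Z$, the cokernel of $M$ is isomorphic to the cokernel of $D$. This gives
\[
\BF(X)=\Z^n/M\Z^n\cong \Z^n/D\Z^n\cong \bigoplus_{i=1}^r \Z/d_i\Z\;\oplus\;\Z^{\,n-r}.
\]

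The first assertion follows from this decomposition. $\BF(X)$ is infinite if and only if $n-r>0$. This holds if and only if $M$ is singular over $\Q$, which is equivalent to $\det(\Id-A_X)=0$. In passing, this also shows that $\rk_\Z\BF(X)=n-r=\dim_\Q\ker(\Id-A_X)$, which is consistent with the definition of $b(X)$.

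For the second assertion, assume $\det M\neq 0$, so that $r=n$. Then $|\BF(X)|=\prod_{i=1}^n|d_i|=|\det D|$. Moreover, $\det D=\det P\cdot\det M\cdot\det Q$ with $\det P,\det Q\in\{\pm1\}$, so $|\BF(X)|=|\det(\Id-A_X)|$.

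No step is a genuine obstacle. The only point needing care is to justify that equivalent matrices (in the sense of $\GL_n(\Z)$-multiplication on both sides) have isomorphic cokernels. This holds because $P$ induces an automorphism of $\Z^n$ carrying $M\Z^n=MQ\Z^n$ onto $PMQ\Z^n$.
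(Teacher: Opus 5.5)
Your proof is correct and follows the paper's own argument: take the Smith normal form of $\Id-A_X$ over $\Z$, read off finiteness from the free rank $n-r$, and compare $|\det D|$ with $|\det(\Id-A_X)|$ using that the transforming matrices are unimodular. You also justify explicitly that two-sided $\GL_n(\Z)$-equivalence preserves the cokernel, a step the paper leaves implicit.
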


\begin{proof}
Set $M = \Id-A_X$ of rank $r$.
This is square matrix with integer coefficients and hence admits a Smith normal form; i.e., there exist invertible matrices $U,V$ with integer coefficients such that
\[
M = UDV \text{ where } D=\operatorname{diag}(d_1, \ldots, d_r, 0 \ldots, 0),
\]
satisfying $d_i \mid d_{i+1}$ for all $1\leq i < r$.
Note that $\det(U)$ and $\det(V)$ are $\pm1$.
It follows that
\[
\Z^n/M\Z^n \simeq \Z^n/D\Z^n \simeq \Z^{n-r} \oplus \bigoplus_{i=1}^r \Z/d_i\Z.
\]
This group is finite exactly when the free part vanishes (i.e., $n=r$) equivalently when $\det(M)\neq 0$.
When this happens, we observe that
\[
\abs{\Z^n / M\Z^n} = \abs{\Z^n / D\Z^n}  = \abs{\det D} = \abs{\det M}. 
\]
This completes the proof.
\end{proof}

The next result shows that if the Bowen-Franks group of the base graph $X$ is finite, then the prime divisors of the size of the Bowen-Franks group of the graph in any subsequent layer is determined by the base.  

\begin{lemma}
\label{Nakayama}
If $\abs{\BF(X)} < \infty$ then $\BF(X_{\infty}) : = \varprojlim_n\BF(X_n)$ is a finitely generated $\Lambda$-torsion module.
Furthermore, if $p\nmid \abs{\BF(X)}$, then $p\nmid \abs{\BF(X_m)}$ for all $m\geq 1$.
\end{lemma}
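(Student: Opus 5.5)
The plan is to realize everything inside the group ring of the tower and then apply Nakayama's lemma. For the constant $\Z_p$-tower, the vertex set of $X_m$ is $V(X)\times \Gamma/\Gamma^{p^m}$, and every edge $e$ lifts to the edges $(e,g)$ going from $(o(e),g)$ to $(t(e),g\theta)$. Hence $\Div(X_m)\cong \Z[\Gamma_m]^n$, where $\Gamma_m=\Gamma/\Gamma^{p^m}$. Under this identification the adjacency operator of $X_m$ is the matrix $A_X\cdot\theta_m$ (or $A_X\theta_m^{-1}$, depending on convention), viewed as an $n\times n$ matrix over $\Z[\Gamma_m]$. Thus
\[
\BF(X_m)\cong \Z[\Gamma_m]^n/(\Id-\theta_m A_X)\Z[\Gamma_m]^n .
\]
I would then tensor with $\Z_p$. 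This does not change the $p$-part, and it gives compatible surjections $\BF(X_{m+1})\otimes\Z_p\to\BF(X_m)\otimes\Z_p$, because the projections $\Z_p[\Gamma_{m+1}]\to\Z_p[\Gamma_m]$ carry the relation matrix to the relation matrix. The $\Z_p$-part of the inverse limit is therefore $\Lambda^n/(\Id-(1+T)A_X)\Lambda^n$. Inverse limits of finitely generated compact modules are exact here, so passing to the limit is harmless. (I interpret $\BF(X_\infty)$ via its $p$-adic completion, as the $\Lambda$-module structure requires.)

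Finitely generated is immediate from this presentation, since the module is a quotient of $\Lambda^n$. For torsion I will show that $\det(\Id-(1+T)A_X)\in\Lambda$ is nonzero: evaluating at $T=0$ gives $\det(\Id-A_X)$, which is nonzero by Lemma~\ref{BFX infinite} because $\BF(X)$ is finite. A square presentation matrix with nonzero determinant $f$ means the cokernel is annihilated by $f$, by the adjugate argument. Hence $\BF(X_\infty)$ is $\Lambda$-torsion, with characteristic element dividing $f$.

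For the second statement, set $M_m=\BF(X_m)\otimes\Z_p$, which is finitely generated over $\Z_p[\Gamma_m]$. This is a local ring with maximal ideal $(p,\theta_m-1)$, since $\Gamma_m$ is a $p$-group. The coinvariants satisfy
\[
M_m/(\theta_m-1)M_m\cong \Z_p^n/(\Id-A_X)\Z_p^n=\BF(X)\otimes\Z_p .
\]
If $p\nmid|\BF(X)|$, then the right side is $0$, so $M_m=(\theta_m-1)M_m\subseteq\mathfrak m M_m$. Nakayama's lemma then gives $M_m=0$. Since $\BF(X_m)$ is finitely generated abelian, this means $\BF(X_m)$ is finite (no free part, as that would survive $\otimes\Z_p$) of order prime to $p$.

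The main obstacle is carefully justifying the identification of $\Div(X_m)$ with $\Z[\Gamma_m]^n$ compatibly with both the adjacency operator and the projection maps, including the side of multiplication by $\theta$. Once that is set up, the remaining steps are formal.
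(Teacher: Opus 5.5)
Your proof is correct, but it is organized differently from the paper's.

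The paper compares the $\Lambda$-presentation of $\BF(X_\infty)$ with the level-$m$ presentations through a commutative diagram. From this it extracts the control isomorphisms $\BF(X_m)\otimes\Z_p\cong\BF(X_\infty)/\omega_m\BF(X_\infty)$, with $\omega_m=(1+T)^{p^m}-1$, and then works entirely at the infinite level:
\begin{enumerate}
\item torsion follows from finiteness of $\BF(X_\infty)/T\BF(X_\infty)\cong\BF(X)\otimes\Z_p$, implicitly via the structure theory;
\item the second claim follows from Nakayama's lemma over $\Lambda$, which kills $\BF(X_\infty)$, and then descent to each $X_m$.
\end{enumerate}

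You argue differently on both counts. You get torsion directly from the square presentation matrix $\Id-(1+T)A_X$: its determinant is nonzero because its augmentation is $\det(\Id-A_X)\neq 0$. You then apply Nakayama level by level over the local ring $\Z_p[\Gamma_m]$. This needs only the coinvariant computation $M_m/(\theta_m-1)M_m\cong\BF(X)\otimes\Z_p$, so the inverse limit plays no role in the second statement.

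Your route is more elementary. It also gives more than you state: since the determinant is nonzero, $\Id-(1+T)A_X$ is injective on $\Lambda^n$. Localizing at height-one primes then shows that $\det(\Id-(1+T)A_X)$ generates the characteristic ideal, not merely that the characteristic element divides it. That equality is exactly what the paper relies on later for all its $\mu$- and $\lambda$-computations.

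The paper's route, in exchange, records the control isomorphism linking $\BF(X_\infty)$ to the finite layers. That isomorphism is what connects the $\Lambda$-invariants to the growth of $\BF(X_m)$ up the tower.
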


\begin{proof}
By definition,
\[
\Div(X_\infty) = \varprojlim_m \Div(X_m) \simeq \bigoplus_{v\in V(X)}\Lambda v \quad \text{ and } \quad \Div(X_\infty)/T \simeq \Div(X).
\]
Let $A$ be the adjacency matrix of $X$. If we consider $\Div(X_m)\otimes \Z_p$ as a free $\Z_p[\Gamma/\Gamma^{p^m}]$-module of rank $\vert V(X)\vert$.
The Adjacency matrix $A_m$ of $X_m$ can be written as $\theta A$.
In particular, there is a natural surjection $(I-\theta A)\Div(X_\infty)\to (I-A_m)\Div(X_m)\otimes \Z_p$.
Consider the diagram
\begin{align*}
\begin{matrix}
0 & \longrightarrow & (I-\theta A)\Div(X_\infty) & \longrightarrow &  \Div(X_\infty) & \longrightarrow & \BF(X_\infty) & \longrightarrow 0
\cr \hbox{ } && \lda \alpha_m && \lda \beta_m &&\lda \gamma_{m} & \hbox{} \cr 0 & \longrightarrow & (I-A_m)\Div(X_m) \otimes \Z_p & \longrightarrow & \Div(X_m) \otimes \Z_p & \longrightarrow & \BF(X_m) \otimes \Z_p & \longrightarrow 0
\end{matrix}
\end{align*}
We first note that each of the maps $\alpha_m$, $\beta_m$, and $\gamma_m$ are surjective.
By an application of the snake lemma
\[
0 \longrightarrow \ker(\alpha_m) \longrightarrow \ker(\beta_m) \longrightarrow \ker(\gamma_m) \longrightarrow 0.
\]
The map $\beta_m$ is the reduction from the infinite tower to the $m$-th finite layer.
So, $\ker(\beta_m) = \omega_m \Div(X_\infty)$ with $\omega_m = (1+T^{p^m})-1$.
When $m=0$, note that $\omega_0 = (1+T)-1 =T$.
Thus, $\ker(\beta_0) = T\Div(X_\infty)$.
Now,
\begin{align*}
    \BF(X) \otimes \Z_p & \simeq \BF(X_\infty)/\ker(\gamma_0)\\
    & \simeq \Div(X_\infty)/(T\Div(X_\infty) + (I-\theta A)\Div(X_\infty))\\
    & \simeq \BF(X_\infty)/T\BF(X_\infty).
\end{align*}

Thus, $\BF(X_\infty)/T\BF(X_\infty)$ is finite.
In particular, $\BF(X_\infty)$ is $\Lambda$-torsion. 
If $p\nmid \vert \BF(X)\vert$, by Nakayama's lemma, we see that the $p$-part of $\BF(X_\infty)$ is $0$.
Repeating the argument as above
\[
\BF(X_m)\otimes \Z_p \simeq \BF(X_\infty)/\omega_m \BF(X_\infty)
\]
and the claim follows.
\end{proof}

\begin{lemma}
\label{lemma 2.13}
Let $X$ be a finite connected digraph and $X_m$ be the $m$-th layer of the $\Z_p$-tower.
Then
\begin{align*}
a(X_m) & = a(X) + \sum_{k=1}^{p^m-1}\text{algebraic multiplicity of $\zeta_{p^m}^k$ as a root of $\det(\Id-\theta A_X)$}\\
b(X_m) & = b(X) + \sum_{k=1}^{p^m-1}\dim_{\Q(\zeta_{p^m}^k)}(\ker(\Id-\zeta_{p^m}^k A_X)).
\end{align*}    
\end{lemma}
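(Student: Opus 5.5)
The plan is to block-diagonalize the adjacency matrix $A_{X_m}$ of the derived graph $X_m = X(\Z/p^m\Z,\alpha_m)$ using the characters of the cyclic group $\Z/p^m\Z$. We then read off the algebraic and geometric multiplicities of the eigenvalue $1$ blockwise.

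\emph{Setting up the blocks.} Order the vertices of $X_m$ as $V(X)\times \Z/p^m\Z$. An edge $e$ of $X$ from $v$ to $w$ lifts to edges $(v,g)\to(w,g+1)$, since the voltage is constant. Hence
\[
A_{X_m} = A_X\otimes P,
\]
where $P$ is the $p^m\times p^m$ cyclic permutation matrix of the shift $g\mapsto g+1$. This is the matrix form of the identity $A_m=\theta A$ used in the proof of Lemma~\ref{Nakayama}. Over $\C$, $P$ is diagonalized by the discrete Fourier basis, and its eigenvalues are $\zeta_{p^m}^k$ for $k=0,\dots,p^m-1$, each with multiplicity one. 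Conjugating by $\Id\otimes F$, with $F$ the Fourier matrix, gives
\[
A_{X_m}\sim \bigoplus_{k=0}^{p^m-1}\zeta_{p^m}^k A_X .
\]
Equivalently, $\C[V(X_m)]\cong\bigoplus_k \C[V(X)]\otimes\chi_k$ is a decomposition into $A_{X_m}$-stable subspaces. This is the same Maschke-type decomposition as in \eqref{adjacency}, applied now to the abelian group $\Z/p^m\Z$.

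\emph{Algebraic multiplicity.} From the block decomposition,
\[
\det(x\Id-A_{X_m})=\prod_k\det(x\Id-\zeta_{p^m}^kA_X).
\]
The algebraic multiplicity of $1$ is therefore the sum over $k$ of the order of vanishing at $x=1$ of $\det(x\Id-\zeta^k_{p^m}A_X)$. The term $k=0$ contributes $a(X)$. For $k\ge1$, the order of vanishing of $\det(x\Id-\zeta^kA_X)$ at $x=1$ equals the algebraic multiplicity of $1$ as an eigenvalue of $\zeta^kA_X$. This in turn equals the multiplicity of $\zeta^{-k}$ as an eigenvalue of $A_X$, which is the multiplicity of $\theta=\zeta^k$ as a root of $\det(\Id-\theta A_X)$. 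To see the last equality, expand
\[
\det(\Id-\theta A_X)=\prod_i(1-\theta\lambda_i),
\]
whose roots are the $\lambda_i^{-1}$. This gives the first formula.

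\emph{Geometric multiplicity.} Here we use $\ker(\Id-A_{X_m})=\bigoplus_k\ker(\Id-\zeta^kA_X)$, computed over $\C$. The geometric multiplicity of $1$ for $A_{X_m}$ is a rational, and in fact integral, rank computation, so $b(X_m)=\dim_\Q\ker$ equals the complex dimension. For each $k$, the matrix $\Id-\zeta^kA_X$ has entries in $\Q(\zeta_{p^m}^k)$. Kernel dimension does not change under field extension, so $\dim_\C$ equals $\dim_{\Q(\zeta^k)}$. The term $k=0$ gives $b(X)$, which by definition equals $\rk_\Z\BF(X)$.

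The main obstacle is bookkeeping rather than substance. One must justify carefully that $A_{X_m}$ is really $A_X\otimes P$; here the orientation convention $t(e,g)=(t,g\alpha(e))$ matters. One must also handle the case where $X_m$ is disconnected. The lemma concerns the full derived graph, so no restriction to a component is needed. Finally, the statement's phrasing "$\zeta^k$ as a root of $\det(\Id-\theta A_X)$" matches the inversion above, and it should be checked that this agrees with the $\zeta^{-k}$ versus $\zeta^k$ indexing. This is harmless because the sum runs over all $k\neq 0$, so replacing $k$ by $-k$ does not change it.
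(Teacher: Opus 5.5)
Your proof is correct. The paper gives no argument of its own for this lemma and simply cites \cite[Lemma~7.5]{lei2026iwasawa}. Your write-up is a self-contained version of the natural argument. You use the identity $A_{X_m}=A_X\otimes P$ (the matrix form of ``$A_m=\theta A$'' from the proof of Lemma~\ref{Nakayama}), then the character decomposition $A_{X_m}\sim\bigoplus_{k}\zeta_{p^m}^kA_X$, and both formulas follow term by term.

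Two small comments.

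First, your closing worry about $\zeta^k$ versus $\zeta^{-k}$ is unnecessary. The order of vanishing of $\det(x\Id-\zeta_{p^m}^kA_X)$ at $x=1$ and that of $\det(\Id-\theta A_X)$ at $\theta=\zeta_{p^m}^k$ both count the eigenvalues $\lambda$ of $A_X$ (with multiplicity) satisfying $\zeta_{p^m}^k\lambda=1$. So the summands agree without any reindexing.

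Second, your insistence that $X_m$ be the full derived graph $X(\Z/p^m\Z,\alpha_m)$ is necessary for the statement to hold, not mere bookkeeping. A connected component of the restricted tower would not work. Take $X=C_3$ and $p=3$. For $m=2$ the right-hand side is $1+2=3$, coming from the roots $\zeta_9^3,\zeta_9^6$ of $1-\theta^3$. This equals $a$ of the full graph $X_2$, which is three copies of $C_9$, whereas a single component $C_9$ has $a=1$.
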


\begin{proof}
See \cite[Lemma~7.5]{lei2026iwasawa}  
\end{proof}

\begin{definition}
\label{strongly connected}
Let $X$ be a digraph.
A path of length $k$ in $X$ is a sequence of edges $e_1, \ldots, e_k \in E(X)$ such that $t(e_i) = o(e_{i+1})$ for all $1 \leq i \leq k-1$.
The digraph $X$ is called \emph{strongly connected} if there is a path from $v$ to $v'$ in $X$ for all pairs $(v,v') \in V(X)\times V(X)$.
\end{definition}

\begin{lemma}
Let $X$ be a finite graph with $k$ strongly connected components $X_{[1]},\dots X_{[k]}$ that are pairwise isomorphic.
Then 
\[
\BF(X)=\BF(X_{[1]})^k.
\]
\end{lemma}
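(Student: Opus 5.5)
The plan is to reduce to a block-diagonal computation with the adjacency matrix and then use that cokernels commute with direct sums. I will read the statement in the setting where it is applied, namely the layers $X_m$ of a constant $\Z_p$-tower that split into several copies of a connected piece. There $X$ is the disjoint union of the $X_{[i]}$, so there are no edges between different strongly connected components. This point is essential. If edges between components were allowed, $A_X$ would only be block upper triangular after a suitable ordering, and the cokernel could be a nontrivial extension rather than a product. So the first step is to make this hypothesis explicit: every edge of $X$ has both endpoints in the same $X_{[i]}$.

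Next, order the vertices of $X$ component by component, with $V(X_{[1]})$ first, then $V(X_{[2]})$, and so on. Since no edge leaves a component, the adjacency matrix becomes
\[
A_X=\operatorname{diag}\bigl(A_{X_{[1]}},\dots,A_{X_{[k]}}\bigr).
\]
Reordering the vertices conjugates $A_X$ by a permutation matrix. This does not change $\coker(\Id-A_X)$ up to isomorphism, because a permutation matrix is in $\GL_n(\Z)$. Hence $\Id-A_X$ is block diagonal with blocks $\Id-A_{X_{[i]}}$, and $\Div(X)=\bigoplus_i \Div(X_{[i]})$ compatibly with this decomposition.

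Then use the pairwise isomorphisms. A digraph isomorphism $\phi_i\colon X_{[i]}\to X_{[1]}$ gives a bijection of vertex sets preserving edge multiplicities. So there is a permutation matrix $P_i$ with $P_iA_{X_{[i]}}P_i^{-1}=A_{X_{[1]}}$. Because $P_i\in\GL(\Z)$, it induces an isomorphism $\coker(\Id-A_{X_{[i]}})\cong\coker(\Id-A_{X_{[1]}})$, that is, $\BF(X_{[i]})\cong\BF(X_{[1]})$.

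Finally, the cokernel of a direct sum of maps $\bigoplus_i f_i\colon\bigoplus_i M_i\to\bigoplus_i M_i$ is $\bigoplus_i\coker f_i$. This gives
\[
\BF(X)\cong\bigoplus_{i=1}^k\BF(X_{[i]})\cong\BF(X_{[1]})^k.
\]

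The only step that needs care is the first one: justifying that the components are genuinely disconnected from each other. In the tower setting this holds because each $X_m$ is the derived graph and splits into connected components that are copies of one another. In general it has to be imposed as a hypothesis. Everything after that is linear algebra over $\Z$.
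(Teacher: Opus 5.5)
Your proposal is correct and follows the paper's proof. In both, you order the vertices component by component so that $A_X$ is block diagonal with copies of $A_{X_{[1]}}$, and then use that the cokernel of a block-diagonal map is the direct sum of the cokernels. Your explicit hypothesis that no edges join distinct components is exactly what the paper uses tacitly when it asserts that all off-diagonal blocks are zero, and you are right that the literal statement can fail without it: two looped vertices joined by a single edge give $\BF(X)\cong\Z$ rather than $\Z^2$.
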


\begin{proof}
Let $A_1$ be the adjacency matrix of $X_{[1]}$.
Then the adjacency matrix $A$ of $X$ is a block matrix with $A_1$ as blocks on the diagonal, and all other blocks are zero.
The claim is now immediate. 
\end{proof}

\section{Classes of Examples}
\label{classesofexamples}
We compute the Bowen--Franks group for classes of (directed) Cayley graphs $X$.
We study the defect $\delta(X_m)$ for each layer of the constant $\Z_p$-extension of $X$.
This provides evidence towards a conjecture of Lei--M{\"u}ller \cite{lei2026iwasawa} which predicts that $\delta(X_m)$ is constant for all $m$.
When possible, we compute the associated Iwasawa invariants.

\subsection{Cycle Graphs}
As a warm-up exercise, we start with a simple example.
Consider the cyclic group $G=C_n$ and the associated Cayley graph $X = X(C_n,S)$ where $S=\{g\}$ is the generating set.
This is a directed cycle graph with $n$ vertices.

\begin{center}
\begin{figure}[h!]
\begin{tikzpicture} [%
    nd/.style = {circle,fill=black,text=white,inner sep=1pt},
    tn/.style = {node distance=1pt},
    redarrow/.style={->, red, fill=none,>=stealth},
    blueline/.style={->,blue,fill=none}]
    
\def\r{2.0} 

\node[nd] (v0) at (0:\r) {};
\node[nd] (v1) at (60:\r) {};
\node[nd] (v3) at (180:\r) {};
\node[nd] (v4) at (240:\r) {};
\node[nd] (v5) at (300:\r) {};

\node[tn] [right=of v0] {\tiny{$e$}};
\node[tn] [above right=of v1] {\tiny{$g^{n-1}$}};
\node[tn] [left=of v3] {\tiny{$g^3$}};
\node[tn] [below left=of v4] {\tiny{$g^2$}};
\node[tn] [below right=of v5] {\tiny{$g$}};

    \draw[->] (v1) -- (v0);
    \draw[dashed] (v3) to[bend left=45] (v1);
    \draw[->] (v4) -- (v3);
    \draw[->] (v5) -- (v4);
    \draw[->] (v0) -- (v5); 
    
\end{tikzpicture}
\caption{Cayley graph of $C_{n}=  \langle g \mid g^n = e\rangle$}
\end{figure}
\end{center}

\begin{theorem}
Let $X = C_n$ be a cycle graph with $n$ vertices.
Then $\mathcal{BF}(X)$ is infinite and defect $\delta(X)=0$.
Fix a prime $p$.
Let $X_m$ denote the $m$-th layer of the constant $\Z_p$-cover of $X$. Then the defect $\delta(X_m) = 0$.
The Iwasawa invariants $\mu_p(\mathcal{BF}(X_\infty)) = 0$ and $\lambda_p(\mathcal{BF}(X_\infty)) = 1$.
\end{theorem}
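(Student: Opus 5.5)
The plan is to compute everything directly from the adjacency matrix $A_X$, the $n\times n$ cyclic permutation matrix $P$ sending $e_{g^i}$ to $e_{g^{i+1}}$. Its characteristic polynomial is $x^n-1$. The eigenvalues are therefore the $n$-th roots of unity, each simple, and in particular $1$ has algebraic multiplicity $a(X)=1$. The kernel of $\Id-P$ is spanned by the all-ones vector, so $b(X)=1$ and $\delta(X)=0$. Since $\det(\Id-A_X)=0$, Lemma~\ref{BFX infinite} shows $\BF(X)$ is infinite. In fact, row-reducing $\Id-P$ (adding successive rows) gives Smith form $\operatorname{diag}(1,\dots,1,0)$, so $\BF(X)\simeq\Z$.

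For the tower, I will use Lemma~\ref{lemma 2.13}. I expect $\det(\Id-\theta A_X)=1-\theta^n$, because $\det(\Id-tP)=1-t^n$ for a cyclic permutation matrix. The algebraic multiplicity of $\zeta_{p^m}^k$ as a root of $1-\theta^n$ is $1$ if $(\zeta_{p^m}^k)^n=1$ and $0$ otherwise. For each such $\zeta=\zeta_{p^m}^k$, the kernel of $\Id-\zeta P$ is one-dimensional, since $\zeta^{-1}$ is a simple eigenvalue of $P$. So in each summand the algebraic and geometric contributions agree, and $a(X_m)-b(X_m)=a(X)-b(X)=0$.

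As a sanity check (and an alternative route), $X_m$ is a disjoint union of $\gcd(n,p^m)$ directed cycles of length $np^m/\gcd(n,p^m)$. Each cycle has defect $0$ by the first paragraph, and the invariants $a$ and $b$ are additive over components by the block-diagonal remark preceding this section.

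For the Iwasawa invariants, I will work in the constant tower over one connected component, following the convention that skips the first $m_0=\val_p(n)$ layers. Write $n=p^{m_0}n'$ with $p\nmid n'$. Then $\BF(X_\infty)$ is the $\Lambda$-module $\Lambda^n/(\Id-\theta P)\Lambda^n$. Using the same Smith-type elimination over $\Lambda$, this is $\Lambda/(1-\theta^{n'})$ up to relabelling the generator: the component cycle in the restricted tower corresponds to $n'$ vertices over the new base, with $\theta$ replaced by the generator of $\Gamma^{p^{m_0}}$.

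With $\theta=1+T$, the element $(1+T)^{n'}-1$ equals $T\cdot u(T)$, where $u(0)=n'$ is a unit because $p\nmid n'$. Hence $\BF(X_\infty)\simeq\Lambda/(T)$, which gives $\mu=0$ and $\lambda=1$. Here $\BF(X)$ is infinite, so Lemma~\ref{Nakayama} does not apply. Instead I will verify directly that $\BF(X_\infty)/\omega_m\simeq\Z_p=\BF(X_m)\otimes\Z_p$, so that the inverse limit is indeed $\Z_p=\Lambda/(T)$, torsion with characteristic polynomial $T$.

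The main obstacle is this last bookkeeping. One has to identify the correct normalisation of the tower (the restriction to a connected component and the reindexing of $\theta$) and make sure the elimination over $\Lambda$ is legitimate. I would handle it by computing $\BF$ of a directed $N$-cycle explicitly as $\Z$ at every finite layer, with transition maps the identity on the sum-of-coefficients functional. Then $\varprojlim\BF(X_m)\otimes\Z_p=\Z_p$ with $\Gamma$ acting trivially, which is $\Lambda/(T)$.
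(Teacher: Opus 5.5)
Your proposal is correct, but it takes a different route from the paper for the tower defect and especially for the Iwasawa invariants. The base-level computation is the same as the paper's. For $\delta(X_m)=0$, the paper uses your ``sanity check'': decompose $X_m$ into disjoint directed cycles, each of defect $0$. Your primary route goes through Lemma~\ref{lemma 2.13}: $\det(\Id-\theta A_X)=1-\theta^n$ has simple roots, and each $\ker(\Id-\zeta A_X)$ is one-dimensional. This is the mechanism the paper saves for the dihedral and abelian cases.

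The real difference is in the Iwasawa invariants. The paper computes the characteristic element $(1+T)^n-1$ of the unrestricted tower and gets $\mu=0$ from the leading coefficient. For $p\mid n$ it notes $(1+T)^n-1\equiv(1+T^{p^{m_0}})^{n'}-1\pmod p$, so the whole tower has $\lambda=p^{m_0}$. It then simply asserts that each of the $p^{m_0}$ components has $\lambda=1$.

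You instead pass to the restricted tower with group $\Gamma^{p^{m_0}}$ and generator $\theta'=\theta^{p^{m_0}}$. The monodromy around the base cycle is $\theta^n=(\theta')^{n'}$, so you obtain $\Lambda'/(1-(\theta')^{n'})=\Lambda'/(T')$, because $(1+T')^{n'}-1=T'\cdot(\text{unit})$. Your augmentation-map computation, $\varprojlim\BF(X'_m)\otimes\Z_p=\Z_p$ with trivial $\Gamma'$-action, confirms this independently.

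What your route buys:
\begin{itemize}
\item it produces the module itself rather than only its invariants;
\item it makes the step from the full tower to a single component rigorous;
\item it avoids Lemma~\ref{Nakayama}, whose finiteness hypothesis does fail here, as you note. The paper cites that lemma anyway, relying only on the presentation established in its proof.
\end{itemize}
The paper's route has the merit of being the same determinant template it reuses for all the later examples.

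One wording fix: the restricted base $X'_{m_0}$ still has $n$ vertices. What equals $n'$ is the exponent of $\theta'$ in the monodromy, not a vertex count.
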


\begin{proof}
The adjacent matrix for $C_n$ is 
\[
A_{X} = \begin{bmatrix}
0 & 1 & 0 & 0 & \cdots & 0 & 0 \\
0 & 0 & 1 & 0 & \cdots & 0 & 0 \\
0 & 0 & 0 & 1 & \cdots & 0 & 0 \\
0 & 0 & 0 & 0 & \cdots & 0 & 0 \\
\vdots & \vdots & \vdots & \vdots & \ddots & \vdots & \vdots \\
0 & 0 & 0 & 0 & \cdots & 0 & 1 \\
1 & 0 & 0 & 0 & \cdots & 0 & 0
\end{bmatrix}
\] 
We calculate that the characteristic polynomial $\Char_{A_X}(x) = \det( x\Id-A_X) = x^n - 1$, so $\det(\Id - A_{X}) = 0$, indicating that the Bowen--Franks group is an infinite group.
Observe that $1$ is an eigenvalue of $A_X$ with multiplicity 1.
This means that the algebraic rank $a(X) = 1$.
On the other hand, the null-rank of the matrix $\Id-A_X$ is also 1 (as it must be) which means that the geometric rank $b(X) = 1$.
Thus, the defect $\delta(X) = 0$.

By definition, 
\[
\mathcal{BF}(X) = \Div(X)/(\Id-A_X)\Div(X) \cong \Z^n/(\Id-A_X)\Z^n.
\]

\noindent \textit{Claim:}
With notation as above  $\mathcal{BF}(X) \cong \Z$.

\medskip

\noindent \textit{Justification:}
To prove the claim, it suffices to show that $\rk(\Id-A_X)$ is $n-1$ and that the space spanned by the columns of $\Id-A$ can be complemented to a $\Z$-basis of $\Z^n$.
The matrix 
\[
\Id-A_X = \begin{bmatrix}
1 & -1 & 0 & 0 & \cdots & 0 & 0 \\
0 & 1 & -1 & 0 & \cdots & 0 & 0 \\
0 & 0 & 1 & -1 & \cdots & 0 & 0 \\
0 & 0 & 0 & 1 & \cdots & 0 & 0 \\
\vdots & \vdots & \vdots & \vdots & \ddots & \vdots & \vdots \\
0 & 0 & 0 & 0 & \cdots & 1 & -1 \\
-1 & 0 & 0 & 0 & \cdots & 0 & 1
\end{bmatrix}
\]To show that $\rk(\Id-A_X) = n-1$, we notice that all rows except the $n$-th row are reduced.
Next, the sum of all the rows is $\begin{bmatrix} 0 & \cdots & 0\end{bmatrix}$, so we can row reduce $\Id-A_X$ to get the matrix 
\[
\begin{bmatrix}
1 & -1 & 0 & 0 & \cdots & 0 & 0 \\
0 & 1 & -1 & 0 & \cdots & 0 & 0 \\
\vdots & \vdots & \vdots & \vdots &  & \vdots & \vdots \\
0 & 0 & 0 & 0 & \cdots & 1 & -1 \\
0 & 0 & 0 & 0 & \cdots & 0 & 0 
\end{bmatrix}.
\]
This shows that the rank is $n-1$. If we consider the vector space spanned by the columns of $\Id-A_X$ and the first standard basis vector $e_1$, we can easily see that this span contains 
\[e_2=v_2+e_1,\]
where $v_2$ is the second column of the matrix. In a similar manner we can now generate each $e_i$ for $1\le i\le n$.
This also completes the proof of the claim.


Now we compute the Bowen--Franks group for the Galois cover $X_1$ corresponding to the first layer $\Z/p\Z$-extension. Such cover contains $pn$ vertices labelled as
\[
\begin{matrix}
    (v_1, 1), &(v_2, 1), &\cdots, &(v_n, 1)\\
    (v_1, g), &(v_2, g), &\cdots, &(v_n, g)\\
     \vdots  & \vdots &\vdots & \vdots  \\
    (v_1, g^{p-1}), &(v_2, g^{p-1}), &\cdots, &(v_n, g^{p-1})
\end{matrix}
\]
The edges of $X_1$ are given in the following manner
\[
(v_i, g^j) \mapsto (v_{(i+1 \bmod n)}, g^{(j+1 \bmod p)}).
\]
Therefore, if $p\mid n$, then $X_1$ is a disconnected graph consisting of $p$-many cyclic graphs (each) with $n$-many vertices.
If we start at the vertex $(v_1,1)$ and propagate along the edges of $X_m$, the next vertex with first component $v_1$  we reach is $(v_1,g^n)$.
If we continue this procedure, we reach $(v_1,g^{kn})$ for $1\le k\le p^{m-\val_p(n)}$.
Thus, in $X_m$ we have $p^{m-\val_p(n)}$ pre-images of $v_1$ that lie in the same connected component.
This shows there exists $m_0$ such that for all $m>m_0$, the number of connected components stops growing.
If $p\nmid n$, we have $m_0=0$.
For calculating Iwasawa invariants (from determinants of matrices) we fix one connected component at level $m\geq m_0$ and consider the $\Z_p$-tower above this cyclic graph.
The same calculations as those done for the base level imply that the defect $\delta(X_m) = 0$.
Indeed, for each connected cyclic component, this is straightforward: a directed cycle has adjacency matrix with eigenvalue $1$ of algebraic and geometric multiplicity $1$.
For the full disconnected graph, it is also still true, because if there are $c_m$ identical cyclic components, then
$a(X_m)=c_m$ and $b(X_m)=c_m$, so
$\delta(X_m)=0$.

\textit{Recall that if the graph is not connected, we restrict to a tower of connected components.
As $X_{m_0}$ consists of $p^{m_0}$ copies of the base graph this is still a $\Z_p$-tower of $X_0$.}

Repeating our calculations from above, the Bowen--Franks group for each connected component at each layer is isomorphic to $\Z$.
Recall from Lemma~\ref{Nakayama} that $\BF(X_\infty)$ is presented over $\Lambda$ by the endomorphism $\Id-\theta A_X$ of the free $\Lambda$-module $\Div(X_\infty)\simeq \Lambda^n$.
Its characteristic element is, up to a unit in $\Lambda$, given by $\det(\Id-\theta A_X)$.
Under the identification $\theta=1+T$, this becomes
$\det(\Id-(T+1)A_X)$.
To calculate the $\mu$ and $\lambda$ invariants we note that
\[
\abs{\det(\Id- (T+1) A_X)} = \abs{(T+1)^n -1} = \abs{T^n + nT^{n-1} + \ldots nT}
\]
Since the coefficient of the highest order term is 1, it follows that 
\[
\mu_p(\mathcal{BF}(X_{\infty})) = 0.
\]
When $p\nmid n$, it is immediate from the above expression of $\det(1-(T+1)A_X)$ that $\lambda_p(\BF(X_\infty))=1$. 
On the other hand, when $p\mid n$, we climb up the tower up to layer $X_{m_0}$ and consider the invariants of a single connected component $X_0$.
We said previously that $m_0 = \val_{p}(n)$.
Writing $n=p^{m_0} r$, we notice that 
\[
(1+T)^n -1 \equiv (1+T)^{rp^{m_0}} -1
\equiv (1+T^{p^{m_0}})^r - 1 \pmod{p} \]
Thus, the first coefficient not divisible by $p$ is $T^{p^{m_0}}$.
This shows that the $\lambda$-invariant of each connected component $X_0$ is precisely 1.
\end{proof}

\subsection{Dihedral Graphs}

The dihedral group of order $2n$ has presentation 
\[
D_{2n} = \langle \sigma, \tau \mid \sigma^n = \tau^2 = e, \tau \sigma \tau = \sigma^{-1}\rangle.
\]
We consider the Cayley graph $X(D_{2n},S)$ where $S=\{\sigma, \tau\}$.

\begin{center}
\begin{figure}[h!]
\begin{tikzpicture} [%
    nd/.style = {circle,fill=black,text=white,inner sep=1pt},
    tn/.style = {node distance=1pt},
    redarrow/.style={->, red, fill=none,>=stealth},
    blueline/.style={->,blue,fill=none}]

    \node[nd] (otl) at (0,0) {};
    \node[nd] (itl) [below right=of otl] {};
    \node[nd] (itr) [right=of itl] {};
    \node[nd] (otr) [above right=of itr] {};
    \node[nd] (ibl) [below=of itl] {};
    \node[nd] (obl) [below left=of ibl] {};
    \node[nd] (ibr) [right=of ibl] {};
    \node[nd] (obr) [below right=of ibr] {};

    \draw[redarrow] (otr) -- (otl);
    \draw[redarrow] (otl) -- (obl);
    \draw[redarrow] (obl) -- (obr);
    \draw[redarrow] (obr) -- (otr); 
    \draw[redarrow] (itl) -- (itr);
    \draw[redarrow] (itr) -- (ibr);
    \draw[redarrow] (ibr) -- (ibl);
    \draw[redarrow] (ibl) -- (itl); 

    \draw[blueline] (ibl) to[bend left=30] (obl);
    \draw[blueline] (obl) to[bend left=30] (ibl);
    \draw[blueline] (itl) to[bend left=30] (otl);
    \draw[blueline] (ibr) to[bend left=30] (obr);
    \draw[blueline] (itr) to[bend left=30] (otr);
    \draw[blueline] (otl) to[bend left=30] (itl);
    \draw[blueline] (obr) to[bend left=30] (ibr);
    \draw[blueline] (otr) to[bend left=30] (itr);

    \node[tn] [below right=of itl] {\tiny{$\sigma$}};
    \node[tn] [below left=of itr] {\tiny{$\sigma^2$}};
    \node[tn] [above left=of ibr] {\tiny{$\sigma^3$}};
    \node[tn] [above right=of ibl] {\tiny{$e$}};

    \node[tn] [below left=of obl] {\tiny{$\tau$}};
    \node[tn] [below=of obr] {\tiny{$\tau\sigma  =  \sigma^3\tau$}};
    \node[tn] [above=of otl] {\tiny{$\sigma \tau =  \tau\sigma^3$}};
    \node[tn] [above=of otr] {\tiny{$\sigma^2 \tau = \tau \sigma^2$}};

\end{tikzpicture}
\caption{Cayley graph of $D_{8}=  \langle \sigma, \tau \mid \sigma^4 = \tau^2 = e, \tau \sigma \tau = \sigma^{-1}\rangle$}
\end{figure}
\end{center}

\begin{proposition}
\label{prop 3.2} Let $X_m$ denote the $m$-th layer of the constant $\Z_p$-cover of $X = X(D_{2n},S)$.
If $p>2$ or $n$ is odd, then $X_m$ is strongly connected.
Otherwise, $X_m$ has two isomorphic connected components.
In particular, $X_1$ consists of two copies of the base graph.
\end{proposition}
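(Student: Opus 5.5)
The plan is to identify each layer $X_m$ with a Cayley graph and then read off its connected components group-theoretically. By the construction of the constant $\Z_p$-tower, $X_m$ is the derived graph $X(\Z/p^m\Z,\alpha_m)$. Its vertex set is $D_{2n}\times \Z/p^m\Z$, and there is an edge $(g,k)\to(gs,k+1)$ for each $s\in S=\{\sigma,\tau\}$. This is exactly the Cayley digraph of the finite group $G_m:=D_{2n}\times \Z/p^m\Z$ with respect to the set $\{(\sigma,1),(\tau,1)\}$.

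Let $H_m$ be the subgroup generated by $(\sigma,1)$ and $(\tau,1)$. In a finite group, every element of $H_m$ is a product of \emph{positive} powers of these generators, since inverses are positive powers. Hence the strongly connected components of this Cayley digraph are precisely the left cosets $gH_m$. The number of components is therefore $[G_m:H_m]$, and $X_m$ is strongly connected iff $H_m=G_m$.

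To compute $H_m$, I will use the projection $H_m\to D_{2n}$. It is surjective, so $[G_m:H_m]=[\Z/p^m\Z : K_m]$, where $K_m=H_m\cap(\{e\}\times\Z/p^m\Z)$. An element of $K_m$ comes from a word $w$ in $\sigma,\tau$ that is trivial in $D_{2n}$. Its second coordinate is the length of $w$ modulo $p^m$; here length means the image of $w$ under the homomorphism from the free group on $\sigma,\tau$ to $\Z$ sending both generators to $1$.

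Since $w$ lies in the normal closure of the relators $\sigma^n$, $\tau^2$, $(\tau\sigma)^2$, and length is additive and conjugation-invariant, the set of possible lengths is the subgroup $\gcd(n,2,4)\Z=\gcd(n,2)\Z$. Each of these lengths is realized by $(\sigma,1)^n=(e,n)$ and $(\tau,1)^2=(e,2)$. Hence $K_m$ is the image of $\gcd(n,2)\Z$ in $\Z/p^m\Z$. If $p>2$ or $n$ is odd, $\gcd(n,2)$ is a unit modulo $p^m$, so $K_m$ is everything and $X_m$ is strongly connected. If $p=2$ and $n$ is even, then for $m\ge1$ the subgroup $K_m=2\Z/2^m\Z$ has index $2$, giving exactly two components.

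For the isomorphism claim, I will use left multiplication by $(e,1)$. It is an automorphism of the Cayley digraph (Cayley digraphs admit all left translations), and it interchanges the two cosets of $H_m$, so the two components are isomorphic. For $m=1$ with $p=2$, $n$ even, $X_1$ has $4n$ vertices, so each component has $2n$ vertices. The covering map restricted to a component is a covering of $X$ of degree $1$, and hence an isomorphism onto $X$. This shows $X_1$ is two copies of the base graph.

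The main obstacle is the justification that the lengths of words trivial in $D_{2n}$ form exactly $\gcd(n,2)\Z$. This rests on length being a well-defined homomorphism on the free group that is invariant under conjugation, so that it descends to the normal closure of the relators. I will state this carefully, using the given presentation of $D_{2n}$ with $\tau\sigma\tau\sigma$ as the third relator.
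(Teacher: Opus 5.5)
Your proof is correct, but it is organized differently from the paper's.

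The paper argues with closed walks in the base graph. Since $X$ is strongly connected, it suffices that closed walks at a vertex realize every length modulo $p^m$. The paper gets this from the $\tau$-cycle of length $2$ when $p>2$, and from the $\sigma$-cycle of length $n$ when $n$ is odd. For $p=2$ and $n$ even, it simply asserts that every cycle has even length and concludes that there are two components.

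You instead note that $X_m$ is itself the Cayley digraph of $D_{2n}\times\Z/p^m\Z$ with respect to $\{(\sigma,1),(\tau,1)\}$. Its components are therefore the cosets of $H_m$, and you compute their number exactly as the index of $K_m$. Here $K_m$ is the image of $\ell(N)=\langle n,2,4\rangle=\gcd(n,2)\Z$, with $N$ the normal closure of the relators.

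The underlying principle is the same: components are controlled by the lengths of closed walks modulo $p^m$. Your version, however, gives both bounds in one uniform computation. The observation that all relators have even length when $n$ is even is exactly the justification of the parity claim that the paper leaves unproved. The deck translation by $(e,1)$ also makes the isomorphism of the two components explicit for every $m\geq 1$, which the paper leaves implicit.

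Your argument extends verbatim to the constant tower over any Cayley digraph of a finite group. The number of components of $X_m$ is $\gcd(d,p^m)$, where $d>0$ generates the subgroup of $\Z$ spanned by the total exponent sums of a set of defining relators. In turn, the connectivity half of the paper's argument needs no presentation at all. It applies to any strongly connected digraph containing a closed walk of length prime to $p$.
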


\begin{proof}
This follows from the fact that $X$ has a cycle of length $2$ and \cite[Remark~2.9]{lei2026iwasawa}.
We give a more detailed sketch for the convenience of the reader.

Let us consider the case that $p>2$.
As the base graph is strongly connected, it suffices to fix one vertex $v$ in $X = X(D_{2n},S)$ and show that $(v,1)$ is connected to every $(v,\theta^k)$ in $X_m$, where $\theta$ is a generator of $\Z/p^m\Z$.
This is equivalent to showing that there is a cycle in $X(D_{2n},S)$ through $v$ of length $k+lp^m$ for some $l$. If $p\neq 2$, every equivalence class in $\Z/p^m\Z$ contains an element divisible by $2$.
In particular, there exists a non-negative integer $k'$ such that $2k'\equiv k\pmod {p^m}$, i.e., if we loop to the cycle of length $2$ $k'$-times we obtain a path from $v$ to itself that has voltage assignment $k\pmod{p^m}$. 
The case that $n$ is odd and $p=2$ can be proved similarly using the fact that there is a cycle of odd order. 

It remains to consider the case of even $n$ and $p=2$.
In this case, every cycle has even length.
On the other hand, $2\pmod{2^m}$ generates a subgroup of order $2^{m-1}$ in $\Z/2^m\Z$.
Thus, we obtain exactly $2$ components.
As $2$ is the trivial element in $\Z/2\Z$, each connected component of $X_1$ contains exactly one vertex $w$ above any vertex $v$ of $X$. This implies that indeed $X_1$ consists of two copies of the base  graph. 
\end{proof}
\subsubsection{Representation theory}
\label{sec:represntation}

In our particular setting, $G=D_{2n}$ and $S=\{\sigma,\tau\}$ it follows from Section~\ref{cayley and rep theory} that the adjacency matrix
\[
A = \rho(\sigma) + \rho(\tau),
\] where $\rho$ denotes the regular representation of $D_{2n}$.
We recall that the irreducible representations of $D_{2n}$  depend on the parity of $n$.
When $n$ is odd, there are $\frac{n+3}{2}$ conjugacy classes, so there are exactly $\frac{n+3}{2}$ (different) irreducible representations.
There are \textit{two} 1-dimensional representations
\begin{align*}
    \text{trivial}: & \hbox{ } \sigma \mapsto 1 \text{ and } \tau \mapsto 1\\
    \operatorname{sign}_{\tau}: & \hbox{ } \sigma \mapsto 1 \text{ and } \tau \mapsto -1.
\end{align*}
The $2$-dimensional representations $\rho_j$ with $1\leq j \leq \frac{n-1}{2}$ are defined via
\[
\sigma \mapsto \begin{bmatrix}
        \zeta^j_n & 0 \\
        0 & \zeta^{-j}_n
    \end{bmatrix} \text{ and } \tau \mapsto \begin{bmatrix}
        0 & 1 \\
        1 & 0
    \end{bmatrix}.
\]
Here, we use the notation $\zeta_n = e^{\frac{2\pi j}{n}}$.
We can write down the corresponding block matrices.
For the trivial representation, $M_{\mathbf{1}} = [1+1] = [2]$ and for the alternating representation, the block $M_{\operatorname{sign}_{\tau}} = [1-1] = [0]$.
For the $2$-dimensional representations, the blocks, $M_{\rho_j} = \begin{bmatrix}
    \zeta^j_n & 1 \\
     1 & \zeta^{-j}_n
\end{bmatrix}$.

When $n$ is even, there are  $\frac{n}{2}+3$ (different) irreducible representations, \textit{four} of which are 1-dimensional
\begin{align*}
    \text{trivial}: & \hbox{ } \sigma \mapsto 1 \text{ and } \tau \mapsto 1\\
    \operatorname{sign}_{\sigma}: & \hbox{ } \sigma \mapsto -1 \text{ and } \tau \mapsto 1\\
    \operatorname{sign}_{\tau}: & \hbox{ } \sigma \mapsto 1 \text{ and } \tau \mapsto -1\\
    \operatorname{sign}_{\sigma,\tau}: & \hbox{ } \sigma \mapsto -1 \text{ and } \tau \mapsto -1.
\end{align*}
The $2$-dimensional representations $\rho_j$ with $1\leq j \leq \frac{n}{2}-1$ are defined as before.
We can write down each corresponding matrix as in the previous case.

\subsubsection{Analysis of Bowen--Franks groups of \texorpdfstring{$X$}{}}
\label{sec: size and structure}

In this section, we study the size and structure of the Bowen--Franks group of the base graph $X$.

\begin{proposition}
\label{prop 3.3}
Consider the Cayley graph $X = X(D_{2n},S)$ where $S=\{\sigma, \tau\}$.    
Then $\BF(X)$ is infinite if and only if $6\mid n$.
Otherwise $\BF(X)$ is finite and
\[
\abs{\BF(X)} = \begin{cases}
    1 & \text{ if } n \equiv \pm1 \pmod{6}\\
    3 & \text{ if } n \equiv \pm2 \pmod{6} \\
    4 & \text{ if } n \equiv 3 \pmod{6}.
\end{cases}
\]
\end{proposition}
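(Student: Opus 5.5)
The plan is to compute $\det(\Id-A_X)$ through the decomposition \eqref{adjacency} of $A_X=\rho(\sigma)+\rho(\tau)$ into irreducible blocks, and then apply Lemma~\ref{BFX infinite}. Since the regular representation contains each irreducible $\rho_i$ with multiplicity $\dim\rho_i$, we have
\[
\det(\Id-A_X)=\prod_{\rho_i \text{ irreducible}} \det(\Id-M_{\rho_i})^{\dim \rho_i}.
\]
Using the list of irreducibles from Section~\ref{sec:represntation}, the one-dimensional factors are as follows. The trivial representation gives $1-2=-1$ and $\operatorname{sign}_\tau$ gives $1-0=1$. When $n$ is even, $\operatorname{sign}_\sigma$ gives $1-(-1+1)=1$ and $\operatorname{sign}_{\sigma,\tau}$ gives $1-(-2)=3$. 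For the two-dimensional $\rho_j$ a direct computation gives
\[
\det(\Id-M_{\rho_j})=(1-\zeta_n^j)(1-\zeta_n^{-j})-1=1-\zeta_n^j-\zeta_n^{-j}=1-2\cos(2\pi j/n),
\]
and this block appears squared.

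\emph{Infiniteness criterion.} A one-dimensional factor never vanishes, and $1-2\cos(2\pi j/n)=0$ exactly when $j/n\equiv\pm1/6 \pmod 1$. With $1\le j<n/2$ this happens iff $6\mid n$ and $j=n/6$. So $\det(\Id-A_X)=0$ iff $6\mid n$, and by Lemma~\ref{BFX infinite} this is exactly when $\BF(X)$ is infinite.

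\emph{Size when $6\nmid n$.} Set $f(x)=1-x-x^{-1}=-x^{-1}(x^2-x+1)$. I would compare $\det(\Id-A_X)$ with the full product $Q=\prod_{j=0}^{n-1} f(\zeta_n^j)$. In $Q$, the term $j=0$ equals $-1$, the term $j=n/2$ (for $n$ even) equals $3$, and the terms $j$ and $n-j$ coincide. Hence $Q=(-1)\cdot[3]\cdot P^2$, where $P=\prod_{1\le j<n/2}(1-2\cos(2\pi j/n))$ and the factor $[3]$ is present only when $n$ is even. This is precisely the same expression as $\det(\Id-A_X)$, because the extra one-dimensional factors for even $n$ multiply to $1\cdot 3$. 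Therefore $\abs{\det(\Id-A_X)}=\abs{Q}$.

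To evaluate $Q$, write $x^2-x+1=(x-\omega)(x-\bar\omega)$ with $\omega=e^{i\pi/3}$. The identity $\prod_j(\zeta_n^j-a)=\pm(a^n-1)$ then gives
\[
\abs{Q}=\abs{\omega^n-1}\,\abs{\bar\omega^n-1}=\abs{1-\omega^n}^2,
\]
since $\abs{\prod_j\zeta_n^{-j}}=1$. This depends only on $n \bmod 6$:
\[
\abs{1-\omega^n}^2=\begin{cases}
1 & \text{if } n\equiv\pm1 \pmod 6,\\
3 & \text{if } n\equiv\pm2 \pmod 6,\\
4 & \text{if } n\equiv 3 \pmod 6.
\end{cases}
\]
Lemma~\ref{BFX infinite} then identifies $\abs{\BF(X)}$ with these values.

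The main obstacle is the bookkeeping in the comparison with $Q$: matching the one-dimensional factors and the unpaired terms $j=0$ and $j=n/2$ against the squared two-dimensional blocks, separately for $n$ odd and $n$ even. I would check this against small cases such as $n=2,3,4$.
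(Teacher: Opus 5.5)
Your proof is correct and follows essentially the same route as the paper: you compute $\det(\Id-A)=\Char_A(1)$ from the block decomposition \eqref{adjacency}, then evaluate $\prod_j(1-2\cos(2\pi j/n))^2$ by factoring $x^2-x+1$ over the primitive sixth roots of unity and using $\prod_j(z-\zeta_n^j)=z^n-1$. The only difference is bookkeeping: you include the unpaired terms $j=0$ and $j=n/2$, correctly matching them with the one-dimensional factors $-1$ and $1\cdot 3$, and take absolute values, so odd and even $n$ are handled uniformly; the paper instead treats the two parities separately, dividing out $z-1$ (respectively $z^2-1$) and tracking the resulting root-of-unity phases.
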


\begin{proof}
Let $A =A_X$ be the adjacency matrix of the directed Cayley graph.
From our previous discussion, we calculate that the characteristic polynomial.
Each two-dimensional irreducible occurs with multiplicity $2$ in the regular representation and contributes a factor $x^2(x-(\zeta_n^j+\zeta_n^{-j}))^2$.
The one-dimensional representations contribute eigenvalues $2,0$ (resp. $2,-2,0,0$) when $n$ is odd (resp. even).
Hence the total zero multiplicity is $n$, giving $x^n$, while the non-zero one-dimensional eigenvalues give the factor $(x-2)$ when $n$ is odd and the factors $(x-2)(x+2)$ when $n$ is even.
Thus,
\begin{equation}
\label{char A}
\Char_A(x) = \begin{cases}
 x^n(x-2) \displaystyle\prod_{j=1}^{(n-1)/2} \left(x-(\zeta^j_n + \zeta^{-j}_n) \right)^2  & \text{ if } n \text{ is odd},\\
  x^n(x-2)(x+2) \displaystyle\prod_{j=1}^{(n/2)-1} \left(x-(\zeta^j_n + \zeta^{-j}_n) \right)^2   & \text{ if } n \text{ is even}.    
\end{cases}
\end{equation}

Recall from Lemma~\ref{BFX infinite} that $\BF(X)$ is infinite precisely when $\det(\Id-A) =0$.
This is determined by when 1 is an eigenvalue of $A$.
On the other hand, when $\abs{\BF(X)}$ is finite, we know
\[
\abs{\BF(X)} = \abs{\det(\Id-A)} = \prod_i \vert 1-\lambda_i\vert,
\]
where $\lambda_i$ are the eigenvalues of $A$.
In other words, it will suffice to calculate 
\[
\Char_A(1) = \begin{cases}
 (-1) \displaystyle\prod_{j=1}^{(n-1)/2} \left(1-(\zeta^j_n + \zeta^{-j}_n) \right)^2  & \text{ if } n \text{ is odd},\\
  (-3) \displaystyle\prod_{j=1}^{(n/2)-1} \left(1-(\zeta^j_n + \zeta^{-j}_n) \right)^2   & \text{ if } n \text{ is even}.    
\end{cases}
\]
We need to study the part involving the product in the above formula.

Suppose that $n$ is odd.
We know
\begin{align*}
    \displaystyle\prod_{j=1}^{(n-1)/2} \left(1-(\zeta^j_n + \zeta^{-j}_n) \right)^2 & = \displaystyle\prod_{j=1}^{(n-1)/2} \left(1-(\zeta^j_n + \zeta^{n-j}_n) \right)^2\\
    & = \displaystyle\prod_{j=1}^{n-1} \left(1-(\zeta^j_n + \zeta^{-j}_n) \right) \\
    & = \displaystyle\prod_{j=1}^{n-1} \left(\zeta^{2j}_n - \zeta^{j}_n + 1 \right)\\
    & = \displaystyle\prod_{j=1}^{n-1}\left( \zeta^j_n - \zeta_6\right)\left( \zeta^j_n - \zeta^{-1}_6\right) \\
    & = \left( \frac{\zeta^n_6 -1}{\zeta_6 - 1}\right)\left( \frac{\zeta^{-n}_6 - 1}{\zeta^{-1}_6 - 1}\right)\\ 
    & = \frac{2- (\zeta^n_6 + \zeta^{-n}_6)}{2-(\zeta_6 + \zeta^{-1}_6)}\\
    & = 2- 2\cos(2\pi n/6) =\begin{cases}
        1 & \text{ if } n\equiv \pm 1\pmod{6} \\
        4 & \text{ if } n\equiv 3\pmod{6}.
    \end{cases}
\end{align*}
Suppose that $n$ is even.
We observe that
\[
\frac{z^n-1}{z^2 -1} = \prod_{\substack{i=1 \\ i\neq n/2}}^{n-1} (z-\zeta^i_n) = \prod_{i=1}^{(n/2)-1} (z^2 -2\Re(\zeta^i_n)z + 1).
\]
But $\zeta^2_6 + 1 = \zeta_6$, so if we choose $z=\zeta_6$ above, then
\[
\frac{\zeta^n_6 - 1}{\zeta^2_6 -1} = \zeta^{(n/2)-1}_6 \prod_{i=1}^{(n/2)-1}  (1-2\Re(\zeta^i_n))
\]
When $n\equiv 0\pmod{6}$ it is straightforward to note that the left side is $0$.
This corresponds to the case that the Bowen--Franks group is infinite.
Furthermore, depending on whether $n\equiv \pm2\pmod{6}$, the left side of the equality is $1$ or $\zeta_6$.
Then
\[
\prod_{i=1}^{(n/2)-1}  (1-2\Re(\zeta^i_n)) =
\begin{cases}
 \frac{1}{\zeta^{(n/2)-1}_6} = 1 & \text{ if } n\equiv 2\pmod{6}\\
 \frac{\zeta_6}{\zeta^{(n/2)-1}_6} = 1 & \text{ if } n\equiv 4\pmod{6}.
\end{cases}
\]
So, in either case
\[
\prod_{i=1}^{(n/2)-1}  (1-2\Re(\zeta^i_n))^2 =1.
\]
The result is now immediate.
\end{proof}

Once we know the size of $\BF(X)$ from Proposition~\ref{prop 3.3}, the precise structure of $\BF(X)$ is immediate when $n\equiv \pm 1, \pm 2 \pmod{6}$.
In the next result, we determine the group structure when $n\equiv 3\pmod{6}$.

\begin{proposition}
\label{prop 3.4}
Consider the Cayley graph $X = X(D_{2n},S)$ where $S=\{\sigma, \tau\}$.    
Suppose that $6\nmid n$.
Then
\[
\BF(X) = \begin{cases}
    \{1\} & \text{ if } n \equiv \pm1 \pmod{6}\\
    \Z/3\Z & \text{ if } n \equiv \pm2 \pmod{6} \\
    \Z/2\Z \times \Z/2\Z & \text{ if } n \equiv 3 \pmod{6}.
\end{cases}
\]
\end{proposition}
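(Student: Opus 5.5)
For $n\equiv \pm1,\pm2 \pmod 6$ the structure follows directly from Proposition~\ref{prop 3.3}. A finite group of order $1$ is trivial, and a group of order $3$ is $\Z/3\Z$. So the only real content is the case $n\equiv 3\pmod 6$. There $|\BF(X)|=4$, and we must rule out $\BF(X)\cong\Z/4\Z$. Since $\BF(X)$ is a finite abelian group of order $4$, it is isomorphic to $\Z/2\Z\times\Z/2\Z$ exactly when it is killed by $2$. Equivalently, we need $\dim_{\mathbb F_2}\BF(X)/2\BF(X)=2$, that is, $\dim_{\mathbb F_2}\ker(\Id-A)\otimes\mathbb F_2=2$, or $\operatorname{rank}_{\mathbb F_2}(\Id+A)=2n-2$.

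I would compute this kernel over $\mathbb F_2$ directly. Write a vector in $\mathbb F_2^{2n}$ as a function $f$ on $D_{2n}$. Since $A=\rho(\sigma)+\rho(\tau)$, with $A e_g = e_{g\sigma}+e_{g\tau}$, the cokernel is dual to the kernel of the transpose. The condition for $f$ to lie in the kernel of $(\Id+A)^{T}$ mod $2$ is
\[
f(g)=f(g\sigma)+f(g\tau)\quad\text{for all } g\in D_{2n}.
\]
Parametrize $g=\sigma^i$ or $g=\sigma^i\tau$. Then $\sigma^i\tau\cdot\sigma=\sigma^{i-1}\tau$ and $\sigma^i\tau\cdot\tau=\sigma^i$. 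Setting $a_i=f(\sigma^i)$ and $b_i=f(\sigma^i\tau)$, the equations become
\[
a_i=a_{i+1}+b_i,\qquad b_i=b_{i-1}+a_i \pmod 2.
\]
Substituting $b_i=a_i+a_{i+1}$ into the second equation gives $a_{i+1}=a_{i-1}+a_i$, a Fibonacci-type recurrence mod $2$ on $\Z/n\Z$ indices. Fibonacci mod $2$ has period $3$. Periodic solutions indexed by $\Z/n\Z$ therefore form a $2$-dimensional space exactly when $3\mid n$, and otherwise only the zero solution exists. Each solution sequence $(a_i)$ determines $(b_i)$. Hence the kernel has dimension $2$ when $3\mid n$, which gives $\BF(X)/2\BF(X)\cong(\Z/2\Z)^2$.

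Combined with $|\BF(X)|=4$, this shows $\BF(X)\cong\Z/2\Z\times\Z/2\Z$. As a sanity check, for $n$ odd with $3\nmid n$ the same argument gives trivial $2$-torsion, consistent with $|\BF(X)|\in\{1,3\}$.

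The main obstacle is getting the orientation right: deciding whether to use $A$ or $A^T$ and tracking right multiplication in the dihedral group. Over a field, the kernel dimensions of $\Id+A$ and $(\Id+A)^T$ agree, so either orientation works as long as the relations are transcribed consistently.
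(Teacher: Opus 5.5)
Your argument is correct, and for the case $n\equiv 3\pmod{6}$ it takes a genuinely different route from the paper.

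The paper works over $\Q$. It notes that $-1$ is an eigenvalue of $A$ with algebraic and geometric multiplicity $2$, coming from the two copies of $\rho_{n/3}$. It saturates the corresponding eigenspace $W$ of $\Id-A$ (eigenvalue $2$) to the lattice $L'=\Div(X)\cap W$ and splits $\Div(X)=L'\oplus L''$. With respect to this splitting, $\Id-A$ is block upper-triangular with $2\Id_2$ in the corner. The paper then uses $\abs{\det(\Id-A)}=4$ from Proposition~\ref{prop 3.3} to force the complementary block to be unimodular, so the off-diagonal block can be cleared and $\BF(X)\cong L'/2L'$.

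You instead compute $\BF(X)/2\BF(X)$ directly by mod-$2$ linear algebra on $D_{2n}$. You reduce the kernel condition to the Fibonacci recurrence over $\mathbb{F}_2$, all of whose solutions have period dividing $3$. Combined with the same order count from Proposition~\ref{prop 3.3}, this rules out $\Z/4\Z$.

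What each route buys:
\begin{itemize}
\item The paper's argument ties the $2$-torsion to a specific irreducible representation. It applies verbatim whenever $\Id-A$ has an integral eigenvalue $d$ on a $k$-dimensional eigenspace with $\abs{\det(\Id-A)}=\abs{d}^k$.
\item Yours needs no eigenspace, saturation, or geometric-multiplicity input, and is checkable by hand.
\item Yours also determines $\BF(X)/2\BF(X)$ for every $n$ at once; for example, there is no $2$-torsion whenever $3\nmid n$. The period $3$ is the mod-$2$ shadow of $\Phi_6\equiv x^2+x+1\pmod{2}$, which also drives Proposition~\ref{prop 3.3}.
\end{itemize}

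One wording fix: where you write $\ker(\Id-A)\otimes\mathbb{F}_2$, you mean the kernel of the reduction of $\Id-A$ modulo $2$. The integral kernel itself is zero here.
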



\begin{proof}
We focus on the proof of the non-trivial case.
Using the block structure we have used before, we see that $A$ has an eigenvalue $-1$ with multiplicity $2$; this corresponds to the eigenvalue coming from the index $j=n/3$.
This means that $2$ is an eigenvalue of $\Id -A$ with algebraic and geometric multiplicity $2$.
Let $W\subset \Div(X)\otimes \Q$ be the $2$-dimensional eigenspace for $\Id-A$ corresponding to the eigenvalue $2$; i.e.,
\[
(\Id-A)w = 2w \text{ for } w\in W.
\]
Let $L'=\textup{Div}(X)\cap W$.
This is the integral lattice sitting inside that rational eigenspace.

As $W$ is a $\Q$-vector space, there is a sublattice $L''$ such that 
\[
\Div(X)=L'\oplus L''.
\]
With respect to a $\Z$-basis that respects the above decomposition, we note that since $L'\subseteq W$ and $\Id-A$ acts as multiplication by $2$ on $W$, the restriction to $L'$ is
\[
(\Id-A)\vert_{L'} = 2\Id_{2}.
\]
Therefore, the matrix $\Id-A$ has the following structure
\[
\begin{pmatrix}
        A' & B\\
        0 & D
    \end{pmatrix},
\]
where $A'=2\Id_{2}$, $B\in \textup{Mat}_{2\times (2n-2)}(\Z)$, and $D\in \GL_{(2n-2)}(\Z)$. 
By Proposition~\ref{prop 3.3}, we observe that
\[
\abs{\det(\Id-A)} = \abs{\BF(X)} = \det(2\Id_2)\abs{\det(D)} =4\abs{\det(D)} = 4.
\]
So, $D$ is a unimodular matrix in $\GL_{(2n-2)}(\Z)$.
It follows that the last $2n-2$ columns of this matrix form a partial basis of $\Div(X)$.
Since $D^{-1}$ has integer entries, integral row operations can remove the $B$-block.
This goes on to show that we can perform row and column operations to obtain
\[
\Id-A \sim \begin{pmatrix}
        2\Id_2 & 0\\
        0 & D
    \end{pmatrix}.
\]
Since $D$ is unimodular, we note that $\coker(D) = \Z^{2n-2}/D\Z^{2n-2}=0$. Hence
\[
\coker(\Id-A) \simeq \coker(2\Id_2) 
\]
It follows that 
\[
\BF(X) = \coker(\Id-A) \cong L'/2L'=\Z/2\Z\times \Z/2\Z. \qedhere
\]
\end{proof}


\subsubsection{Iwasawa Invariants of Bowen--Franks groups of \texorpdfstring{$X_\infty$}{} when \texorpdfstring{$\BF(X)$}{} is finite}

Recall from our earlier discussion that $\BF(X_\infty) = \coker(\Id-\theta A)$ acting on the free $\Lambda$-module $\Div(X_\infty)$.
Its characteristic element is, up to a unit, $\det(\Id-\theta A)$.
Under the identification $\Lambda\simeq \Z_p\llbracket T\rrbracket$ given by $\theta\mapsto 1+T$, this becomes $\det(\Id-(T+1)A)$.
This will allow us to use the definition of the Iwasawa invariants and compute them explicitly.
Since we have already calculated $\Char_A(x) = \det(x\Id-A)$ in \eqref{char A}, it is now easy to see that
\begin{equation}
\label{det(I - (T+1)A)}
\det(\Id - (T+1)A) = \begin{cases}
 1^n(1-2(T+1)) \displaystyle\prod_{j=1}^{(n-1)/2} \left(1-2\Re(\zeta^j_n)(T+1) \right)^2  & \text{ if } n \text{ is odd},\\
  1^n(1-2(T+1))(1+2(T+1)) \displaystyle\prod_{j=1}^{(n/2)-1} \left(1-2\Re(\zeta^j_n)(T+1) \right)^2   & \text{ if } n \text{ is even}.    
\end{cases}
\end{equation}
When $n$ is odd, we have seen in the previous section that $\abs{\BF(X)}$ is finite.
Moreover it is non-trivial precisely when $3\mid n$.
In this case, by Lemma~\ref{Nakayama}, the only prime of interest is $p=2$. 
We note
\begin{align*}
\det(\Id - (T+1)A) & = (-2T - 1) \displaystyle\prod_{j=1}^{(n-1)/2} \left(2\Re(\zeta^j_n)T + (2\Re(\zeta^j_n)-1)\right)^2\\
& = (-2T - 1)(-T-2)^2 \prod_{\substack{j=1 \\ j\neq n/3}}^{(n-1)/2} \left(2\Re(\zeta^j_n)T + (2\Re(\zeta^j_n)-1)\right)^2.
\end{align*}
From our computation in the previous section, each term $(2\Re(\zeta^j_n)-1)$ is in fact a unit.
This means that the contribution of the terms appearing inside the product is trivial to the Iwasawa invariant.
Thus, upon opening the brackets and carrying out the multiplication we observe that the associated $\mu$-invariant is 0 and $\lambda$-invariant is 2.
When $p\neq 2$, both the Iwasawa invariants are 0.

Next we treat the case that $n$ is even.
We perform calculations like in the previous case and see that
\[
\det(\Id - (T+1)A) = (-2T - 1)(2T+3) \displaystyle\prod_{j=1}^{(n/2)-1} \left(2\Re(\zeta^j_n)T + (2\Re(\zeta^j_n)-1)\right)^2.
\]
If $3\nmid n$, the Bowen--Franks group $\abs{\BF(X)}$ is finite, and by Lemma~\ref{Nakayama} the only prime of interest is $p=3$.
As before each term $(2\Re(\zeta^j_n)-1)$ is a unit.
This results in $\mu=0$.
The smallest power of $T$ with coefficients not divisible by 3 is $1$ and hence $\lambda_3=1$.
Since $\mu=0$, the $\lambda$-invariant by definition is equal to the order of vanishing of the characteristic element modulo $p$.
When $p=3$, we see that the constant term is divisible by $3$, while the coefficient of $T$ is a $3$-adic unit.
Hence upon reducing modulo $3$, the expression is divisible by $T$ but not by $T^2$; therefore $\lambda_3=1$.
When $p\neq 3$, the Iwasawa invariants are trivial.

We summarize the calculations in the following proposition.

\begin{proposition}
\label{prop 3.6}
Consider the Cayley graph $X = X(D_{2n},S)$ where $S=\{\sigma, \tau\}$.
Suppose that $n$ is odd.
If $p\neq 2$, then $\mu_p(\BF(X_\infty))=\lambda_p(\BF(X_\infty))=0$.
If $p=2$, then $\mu_p(\BF(X_\infty))=0$ and 
\[
\lambda_p(\BF(X_\infty)) = \begin{cases}
    0 & \text{ if } 3\nmid n\\
    2 & \text{ if } 3\mid n.
\end{cases}
\]
Suppose that $n$ is even and $3\nmid n$.
If $p\neq 3$, then $\mu_p(\BF(X_\infty))=\lambda_p(\BF(X_\infty))=0$.
If $p=3$, then $\mu_p(\BF(X_\infty))=0$ and $\lambda_p(\BF(X_\infty)) =1$.
\end{proposition}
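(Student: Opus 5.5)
We will compute the characteristic element explicitly and read off the invariants factor by factor. By the proof of Lemma~\ref{Nakayama}, $\BF(X_\infty)$ is the cokernel of the square matrix $\Id-\theta A$ acting on the free module $\Lambda^{2n}$. Its characteristic ideal is therefore generated by $f(T):=\det(\Id-(1+T)A)$; this is the standard fact that, for a square presentation with nonzero determinant, the Fitting ideal and the characteristic ideal agree up to a unit.

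We use the factorization \eqref{det(I - (T+1)A)}, which comes from the block decomposition \eqref{adjacency}. Write $c_j=\zeta_n^j+\zeta_n^{-j}$. Then $f(T)$ is, up to sign, the product of three kinds of factors:
\begin{itemize}
\item the factor $1+2T$ from the trivial representation;
\item for $n$ even, the factor $3+2T$ from $\operatorname{sign}_\sigma$;
\item $g(T)^2$, where $g(T)=\prod_j\bigl((1-c_j)-c_jT\bigr)$ runs over the $2$-dimensional representations.
\end{itemize}
The zero eigenvalues contribute only factors equal to $1$. The set $\{c_j\}$ is stable under $\Gal(\Q(\zeta_n)/\Q)$, so $g(T)\in\Z[T]$.

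\emph{Units and $\mu$.} The factor $1+2T$ has constant term $1$, so it is a unit in $\Z_p\llbracket T\rrbracket$ for every $p$. Next we split $g$. If $3\nmid n$, Proposition~\ref{prop 3.3} (more precisely, its proof) gives $g(0)=\prod_j(1-c_j)=\pm1$. Then $g(T)$ has unit constant term and is a unit in $\Z_p\llbracket T\rrbracket$ for all $p$.

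If $n$ is odd and $3\mid n$, we isolate $j=n/3$. There $c_{n/3}=-1$, and the factor is $2+T$. The remaining product $h(T)$ is still Galois-stable, because $c_{n/3}=-1$ is rational, so $h(T)\in\Z[T]$. Moreover $h(0)=\pm1$: the computation in Proposition~\ref{prop 3.3} gives $g(0)^2=4$, and $(1-c_{n/3})^2=4$, so $h(0)^2=1$. Hence $h$ is a unit.

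In all cases, every non-unit factor is a linear polynomial with a coefficient prime to $p$. So no power of $p$ divides $f$, and $\mu_p=0$ throughout.

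\emph{Computing $\lambda$.} It remains to find the $\lambda$-invariant of the non-unit factors, prime by prime.
\begin{itemize}
\item For $n$ odd and $3\nmid n$, $f$ is a unit for every $p$, so $\lambda_p=0$. This covers the case $p\ne2$ for all odd $n$ as well, since $2+T$ is a unit when $p$ is odd.
\item For $n$ odd, $3\mid n$ and $p=2$, the only non-unit factor is $(2+T)^2$. This is a distinguished polynomial of degree $2$, so $\lambda_2=2$.
\item For $n$ even with $3\nmid n$, the only possibly non-unit factor is $3+2T$. It is a unit unless $p=3$. When $p=3$, the unit $2$ times the distinguished polynomial $T+3/2$ gives $\lambda_3=1$.
\end{itemize}
This matches the statement, and is consistent with Lemma~\ref{Nakayama} and the sizes $|\BF(X)|=1,3,4$ from Proposition~\ref{prop 3.3}.

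\emph{Main obstacle.} The delicate step is the unit claim for the product over the $2$-dimensional blocks. Individual factors $1-c_j(1+T)$ have coefficients in $\Z[\zeta_n+\zeta_n^{-1}]$, not in $\Z_p$, so no single factor can be treated as a unit of $\Z_p\llbracket T\rrbracket$ on its own. I would handle this by regrouping into Galois orbits, which gives polynomials with integer coefficients, and then using the product formula from Proposition~\ref{prop 3.3} to evaluate the constant term. If needed, one can alternatively work over $\mathcal{O}\llbracket T\rrbracket$ for the ring of integers $\mathcal{O}$ of a finite extension of $\Q_p$, where Weierstrass degrees are still additive.
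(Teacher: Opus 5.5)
Your proposal is correct and follows essentially the paper's route: you factor $\det(\Id-(1+T)A)$ via the block decomposition, show the $2$-dimensional blocks contribute a unit (after isolating $(2+T)^2$ when $3\mid n$), and read off $\mu$ and $\lambda$ from $2+T$ and $3+2T$; your regrouping into Galois orbits, giving integer polynomials with constant term $\pm1$, is a cleaner version of the paper's remark that each $2\Re(\zeta_n^j)-1$ is a unit (your $\pm1$ is also safer than the paper's claim that the product equals $1$, which fails in sign for e.g.\ $n=8$). One trivial slip: the factor $3+2T$ comes from $\operatorname{sign}_{\sigma,\tau}$ (eigenvalue $-2$), not from $\operatorname{sign}_\sigma$, whose block is $[0]$.
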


\subsubsection{Iwasawa Invariants of Bowen--Franks groups of \texorpdfstring{$X_\infty$}{} when \texorpdfstring{$\BF(X)$}{} is infinite}
In this case,
\begin{equation}
\label{det polynomial}
\det(\Id - (T+1)A) = (-2T - 1)(2T+3)T^2 \displaystyle\prod_{\substack{j=1 \\ j\neq n/6}}^{(n/2)-1} \left(2\Re(\zeta^j_n)T + (2\Re(\zeta^j_n)-1)\right)^2.
\end{equation}
Recall that to study the Iwasawa invariants of $\BF(X_{\infty})$, it suffices to study the invariants of each factor.
Rewrite
\begin{align*}
P_n(T) := \displaystyle\prod_{\substack{j=1 \\ j\neq n/6}}^{(n/2)-1} \left(2\Re(\zeta^j_n)T + (2\Re(\zeta^j_n)-1)\right)^2 & = \displaystyle\prod_{\substack{k\mid n \\ k\neq 2,6}} \prod_{\substack{j=1 \\ \operatorname{ord}(\zeta^j_n)=k}}^{(n/2)-1} \left(2\Re(\zeta^j_n)T + (2\Re(\zeta^j_n)-1)\right)^2 \\
& =: \displaystyle\prod_{\substack{k\mid n \\ k\neq 2,6}} 
P^{(k)}_n(T). 
\end{align*}

\begin{definition}
Given two functions $f(x)=\prod_{i=1}^s (x-\alpha_i)$ and $g(x) = \prod_{i=j}^t (x-\beta_j)$, the resultant 
\[
\operatorname{Res}(f,g) := \prod_{i} g(\alpha_i) = \prod_{j} (-1)^{st} f(\beta_j).
\]
\end{definition}

\begin{lemma}
\label{resultant}
Fix $k\mid n$ and $k\neq 2,6$.
Then
\[
P^{(k)}_n(0) = \operatorname{Res}(\Phi_k, \Phi_6),
\]
where $\Phi_k(x)$ is the $k$-th cyclotomic polynomial.
\end{lemma}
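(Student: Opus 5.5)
The plan is to rewrite each factor $2\Re(\zeta_n^j)-1$ as a value of $\Phi_6$ at a root of unity, and then recognise the product over all primitive $k$-th roots of unity as the resultant.

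First I would record the elementary identity: for any root of unity $\zeta$,
\[
2\Re(\zeta)-1=\zeta+\zeta^{-1}-1=\zeta^{-1}(\zeta^2-\zeta+1)=\zeta^{-1}\Phi_6(\zeta).
\]
By definition, $P^{(k)}_n(0)=\prod (2\Re(\zeta_n^j)-1)^2$. The product runs over those $j$ with $1\le j\le \frac n2-1$ for which $\zeta_n^j$ has order exactly $k$.

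Next I would match the index set with the set $\mu_k^\ast$ of primitive $k$-th roots of unity. Since $k\mid n$, the primitive $k$-th roots of unity are exactly the $\zeta_n^j$ with $0\le j\le n-1$ and $\ord(\zeta_n^j)=k$.
\begin{itemize}
\item If $k\ge 3$, then $\zeta\neq\zeta^{-1}$, so these roots split into pairs $\{\zeta_n^j,\zeta_n^{n-j}\}$. Neither $j=0$ nor $j=n/2$ occurs, because these give orders $1$ and $2$. Hence the range $1\le j\le \frac n2-1$ picks exactly one representative from each pair.
\item The factor $2\Re(\zeta)-1$ takes the same value at $\zeta$ and $\zeta^{-1}$. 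So the square of the product over representatives equals the product over all of $\mu_k^\ast$.
\end{itemize}
This gives
\[
P^{(k)}_n(0)=\prod_{\zeta\in\mu_k^\ast}\zeta^{-1}\Phi_6(\zeta)=\Big(\prod_{\zeta\in\mu_k^\ast}\zeta\Big)^{-1}\operatorname{Res}(\Phi_k,\Phi_6),
\]
using the definition $\operatorname{Res}(f,g)=\prod_i g(\alpha_i)$ with $f=\Phi_k$, whose roots are $\mu_k^\ast$.

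Finally, $\prod_{\zeta\in\mu_k^\ast}\zeta=1$ for $k\ge3$, since the roots pair off into inverse pairs. The degenerate case $k=1$ must be checked separately. Here the index set is empty, so $P^{(1)}_n(0)=1$, while $\operatorname{Res}(\Phi_1,\Phi_6)=\Phi_6(1)=1$. The case $k=2$ is excluded by hypothesis, and $k=6$ is excluded as well. The latter exclusion is consistent with the computation, since there $\Phi_6(\zeta)=0$ and those factors were separated out as $T^2$ in \eqref{det polynomial}.

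I expect no step to be a serious obstacle. The only point requiring care is the bookkeeping showing that the range $1\le j\le\frac n2-1$ picks exactly one element of each inverse pair of primitive $k$-th roots. This bookkeeping is what converts the square in $P^{(k)}_n$ into a product over all of $\mu_k^\ast$.
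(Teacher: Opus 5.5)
Your proposal is correct and follows essentially the same route as the paper: rewrite $2\Re(\zeta)-1=\zeta^{-1}\Phi_6(\zeta)$, then pair $j$ with $n-j$ to turn the squared product over $1\le j\le \frac n2-1$ into a product of $\Phi_6$ over all primitive $k$-th roots of unity, which is $\operatorname{Res}(\Phi_k,\Phi_6)$. The only difference is cosmetic: the paper removes the factor $\zeta^{-j}$ by passing to absolute values (using that $2\Re(\zeta)-1$ is real), while you use $\prod_{\zeta\in\mu_k^\ast}\zeta=1$ for $k\ge 3$ and check $k=1$ separately, which is slightly more careful than the paper.
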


\begin{proof}
We start with the left hand side and observe that
\begin{align*}
   P^{(k)}_n(0) & = \prod_{\substack{j=1 \\ \ord(\zeta^j_n)=k}}^{(n/2)-1} (\zeta^j_n + \zeta^{-j}_n -1)^2\\
   & = \prod_{\substack{j=1 \\ \ord(\zeta^j_n)=k}}^{(n/2)-1} \abs{\zeta^{-j}_n}^2 \abs{\zeta^{2j}_n - \zeta^{j}_n + 1}^2\\
   & = \prod_{\substack{j=1 \\ \ord(\zeta^j_n)=k}}^{(n/2)-1} \abs{\zeta^{2j}_n - \zeta^{j}_n + 1}^2\\
   & = \prod_{\substack{j=1 \\ \ord(\zeta^j_n)=k}}^{(n/2)-1} \Phi_6(\zeta^j_n) \Phi_6(\zeta^{n-j}_n) = \prod_{\substack{j=1 \\ \ord(\zeta^j_n)=k}}^{n-1} \Phi_6(\zeta^j_n) = \operatorname{Res}(\Phi_k, \Phi_6).
\end{align*}
Here, we used the fact that
\[
\Phi_6(\zeta^j_n) =\zeta^{2j}_n-\zeta^{j}_n+1.
\]
This completes the proof of the lemma.
\end{proof}

The coefficient of $T^2$ in \eqref{det polynomial} is $3P_n(0)$.
In what follows, we first show that a prime $q \mid 3P_n(0)$ if and only if $q\mid n$.
Therefore, we show that in the case that $6\mid n$, the only primes we have to consider for calculating Iwasawa invariants are the primes dividing $n$.

\begin{lemma}
\label{lemma to cite in 3.11}
The prime factors of $3P_n(0)$ 
are the prime divisors of $n$.
In particular, the prime divisors of the coefficient of $T^2$ in \eqref{det polynomial} are the prime divisors of $n$.
\end{lemma}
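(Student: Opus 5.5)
Since $6\mid n$ here, Lemma~\ref{resultant} gives $P_n(0)=\prod_{k\mid n,\,k\neq 2,6}\operatorname{Res}(\Phi_k,\Phi_6)$; note the range $j\le n/2-1$ also omits $k=1$ (i.e.\ $j=0$), so the factors run over $k\mid n$ with $k\ge 3$, $k\neq 6$. The plan is to work out the prime factors of each resultant $\operatorname{Res}(\Phi_k,\Phi_6)$ using the classical formula for resultants of cyclotomic polynomials (Apostol): for $k>m\ge 1$,
\[
\abs{\operatorname{Res}(\Phi_k,\Phi_m)}=\begin{cases} q^{\varphi(m)} & \text{if } k/m \text{ is a power of a prime } q,\\ 1 & \text{otherwise,}\end{cases}
\]
with the symmetric statement when $m>k$. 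I would either cite this or re-derive the needed special case $m=6$ directly. A direct route: $\operatorname{Res}(\Phi_k,\Phi_6)=\pm\prod_{\zeta}\Phi_k(\zeta)$ over the two primitive sixth roots $\zeta$, i.e.\ $\pm N_{\Q(\zeta_6)/\Q}(\Phi_k(\zeta_6))$. Then $\Phi_k(\zeta_6)$ is a unit unless $\zeta_6^{-1}\zeta_k^{a}$-type differences have prime-power order, and $1-\xi$ for a root of unity $\xi$ of order $q^e$ has norm a power of $q$, otherwise it is a unit.

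The bookkeeping then runs as follows. For $k\mid n$, the quotient $k/6$ (or $6/k$) is a prime power only for a restricted set of $k$: $k\in\{3\}$ (ratio $2$), $k=2\cdot 3^{a}$ and $k=3\cdot 2^{a}$ and so on. Concretely, $k=6q^{e}$ with $q$ prime, or $k\in\{1,2,3\}$. Among the $k$ we actually use, $k=3$ gives a factor $2^{\varphi(3)}=4$, while $k=6q^e$ gives a power of $q$ with $q\mid k\mid n$. Every other $k$ contributes a unit, i.e.\ $\pm1$. So every prime divisor of $3P_n(0)$ divides $6\cdot\prod q$, where each such $q$ divides $n$; since $2,3\mid n$ as well, every prime factor of $3P_n(0)$ divides $n$.

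For the converse I take each prime $q\mid n$. If $q=3$, the explicit factor $3$ does it. If $q=2$, the $k=3$ term $\operatorname{Res}(\Phi_3,\Phi_6)=4$ does it, and $3\mid n$ holds. If $q\ge 5$, then $6q\mid n$ because $6\mid n$, so $k=6q$ is an allowed index ($6q\neq 2,6$ and $6q\le n$). Its resultant is $\pm q^{\varphi(6)}=\pm q^2$, divisible by $q$. Since all factors are nonzero integers (this uses $k\neq 6$), divisibility passes to the product. The ``in particular'' is immediate from \eqref{det polynomial}: the lowest-order term comes from $T^2$ times the constant terms $(-1)(3)P_n(0)$, so the coefficient of $T^2$ is $-3P_n(0)$.

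The main obstacle is justifying the cyclotomic resultant formula cleanly, in particular the claim that $1-\xi$ is a unit when the order of $\xi$ is not a prime power. I would handle it by citing Apostol's theorem. If I need to be self-contained, I would use $\prod_{\xi\text{ of order } r}(1-\xi)=\Phi_r(1)$, which equals $q$ if $r=q^e$ and $1$ otherwise, after factoring $\Phi_6(\zeta_k^a)=\prod(\zeta_k^a-\zeta_6^{\pm1})$ and grouping the roots $\zeta_k^a\zeta_6^{\mp1}$ by their orders.
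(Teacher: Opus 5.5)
Your proposal is correct and follows the paper's route. You reduce via Lemma~\ref{resultant} to $\prod_{k\mid n,\,k\neq 2,6}\operatorname{Res}(\Phi_k,\Phi_6)$ and then invoke Apostol's theorem, using the symmetry of the resultant when $k<6$. You additionally spell out the bookkeeping: $k=3$ gives $4$, $k=6q^e$ gives $q^2$, and every other $k$ gives $\pm1$. You also check both inclusions explicitly, which the paper leaves to the citation. The only blemish is that your list of exceptional indices includes $k=1$, although $6/1$ is not a prime power; this is harmless because $k=1$ never occurs in the product.
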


\begin{proof}
Note that we are in the situation $3\mid n$ and the coefficient of $T^2$ has a factor of 3.
So we need to prove that the prime divisors of $P_n(0)$ are precisely the prime divisors of $n$ (except possibly 3). 
By Lemma~\ref{resultant}
\[
P_n(0) 
= \prod_{\substack{k\mid n\\ k\neq 2,6}} P_n^{(k)}(0) = \prod_{\substack{k\mid n \\ k\neq 2,6}}\operatorname{Res}(\Phi_k, \Phi_6).
\]
By \cite[Theorem~4]{apostol1970resultants}, we deduce that if $q \neq 3$ is prime then $q \mid P_n$ if and only if $q\mid n$.
The condition $k>6$ required in \emph{op.~cit.} is not a serious one since $\operatorname{Res}(\Phi_k, \Phi_6) = \operatorname{Res}(\Phi_6, \Phi_k)$; see \cite[p.~459]{apostol1970resultants}. 
\end{proof}

Therefore, the primes of interest for calculating the Iwasawa invariants of $\BF(X_\infty)$ when $6\mid n$ are all the prime divisors of $n$.
Fix $k\mid n$ such that $k\neq 2,6$. 
By \cite[Theorem~4]{apostol1970resultants}, it is easy to see that for $k\ge 3$
\[
P^{(k)}_n(0) = \begin{cases}
    1 & \text{ if } k\not\in\{3, 6p^r\},\\
    4 & \text{ if } k=3,\\
    p^2 & \text{ if } k=6p^r.
\end{cases}
\]
In particular, when $k\not\in\{3, 6p^r\}$, we note that $P^{(k)}_n(T)$ is a unit and their contribution to the $\mu$ and $\lambda$ invariant calculation will be trivial for all primes $p$.
We ignore these factors for the rest of the calculation.

When $k=3$, we calculate
\[
P^{(3)}_n(T) = (2\Re(\zeta_3)T + 2\Re(\zeta_3)-1)^2 = (-T-2)^2.
\]

\begin{lemma}
\label{unit}
Fix $k=6p^r$ and $1\leq j \leq (n/2)-1$ such that $\ord(\zeta^j_n)=k$.
Then $2\Re(\zeta^j_n)$ is a unit in $\mathcal{O}_L$ where $L=\mathbb{Q}(\zeta_n)$.
\end{lemma}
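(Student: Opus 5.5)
Write $\zeta=\zeta_n^j$, a primitive $k$-th root of unity with $k=6p^r$, so $2\Re(\zeta)=\zeta+\zeta^{-1}$. Since $\zeta$ is a unit, it suffices to show that $\zeta^2+1$ is a unit in $\mathcal{O}_L$, because $\zeta+\zeta^{-1}=\zeta^{-1}(\zeta^2+1)$. We have
\[
\zeta^2+1=\frac{\zeta^4-1}{\zeta^2-1}.
\]
The plan is to use the classical fact that for a root of unity $\omega$ of exact order $m$, the element $1-\omega$ is a unit unless $m$ is a prime power $\ell^a$. In that case $1-\omega$ generates a prime above $\ell$, and for two primitive $\ell^a$-th roots of unity $\omega,\omega'$ the quotient $(1-\omega)/(1-\omega')$ is a unit.

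The key step is to identify the orders of $\zeta^2$ and $\zeta^4$. Since $\zeta$ has order $6p^r$, the order of $\zeta^2$ is $3p^r$ (or $6p^r/2$ in general). The order of $\zeta^4$ is $3p^r$ if $p\neq 2$, and $3\cdot 2^{r-1}$ if $p=2$.

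We then split into cases.

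\textbf{Case $r\ge1$ and $p\neq3$.} Here $3p^r$ has two distinct prime factors. If $p=2$ with $r\geq 2$, then $3\cdot 2^{r-1}$ also has two distinct prime factors. So both $1-\zeta^2$ and $1-\zeta^4$ are units, and their quotient $\zeta^2+1$ is a unit.

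\textbf{Case $p=2$, $r=1$.} Then $k=12$ and $\zeta^4$ has order $3$, so the orders are $6$ and $3$. The element $1-\zeta^2$ (order $6$) is a unit, but $1-\zeta^4$ is not. I would handle this directly instead: $\zeta+\zeta^{-1}=\pm\sqrt3$ for a primitive $12$th root, which is not a unit. So the lemma as stated appears to require $k\neq 12$, or else must be read with the exception $p=2$, $r=1$ in mind. I would check this carefully against how the lemma is applied.

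\textbf{Case $p=3$.} Then $\zeta^2$ and $\zeta^4$ both have order $3^{r+1}$, a prime power. Both are primitive $3^{r+1}$-th roots of unity, so their differences from $1$ are associates, and the quotient is again a unit.

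\textbf{Case $r=0$.} This is $k=6$, which is excluded.

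The main obstacle is the small-order edge cases, specifically the analysis of $p=2$. The plan is to state and invoke the unit criterion for $1-\omega$ (via $\Phi_m(1)=1$ for non-prime-power $m$, and $\Phi_{\ell^a}(1)=\ell$ with associates among Galois conjugates), then dispatch the cases as above.
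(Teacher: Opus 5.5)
Your argument is correct, and the exception you flag is real. For a primitive $12$th root of unity, $\zeta+\zeta^{-1}=\pm\sqrt3$, which divides $3$ and so is not a unit; the lemma as stated is false for $k=12$.

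The paper makes the same first reduction you do: $2\Re(\zeta_n^j)=\zeta_n^{-j}(1+\zeta_n^{2j})$ with $\zeta_n^{2j}$ of order $3p^r$. It then computes the norm of $1+\zeta_n^{2j}$ directly. The minimal polynomial of $1+\zeta_n^{2j}$ is $\Phi_{3p^r}(x-1)$, so it suffices that $\lvert\Phi_{3p^r}(-1)\rvert=1$. The paper checks this from explicit formulas for $\Phi_{3p^r}$ in the cases $p\neq 3$, $p=2$ and $p=3$.

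That check is exactly where the paper's error sits. For $p=2$, $r=1$ the paper's own formula gives $\Phi_6(x)=(x^3+1)/(x+1)=x^2-x+1$, and $\Phi_6(-1)=3$. For $p=2$ with $r\geq 2$, for $p=3$, and for $p\geq 5$ the value is indeed $\pm1$.

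Your route is different. You write $1+\zeta^2=(1-\zeta^4)/(1-\zeta^2)$ and use two standard facts: $1-\omega$ is a unit precisely when the order of $\omega$ is not a prime power, and $1-\omega$, $1-\omega'$ are associates for primitive roots of the same prime-power order. This replaces a case-by-case polynomial evaluation with a structural fact about cyclotomic units. Its advantage is that the exceptional case shows up immediately: it occurs exactly when $\zeta^4$ has prime-power order, namely order $3$ when $k=12$. The paper's evaluation is more hands-on, and it is easy to miss the $r=1$ case there, since naive substitution into $(x^{3\cdot 2^{r-1}}+1)/(x^{2^{r-1}}+1)$ gives $0/0$.

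On your question about how the lemma is applied: Lemma~\ref{mu and lambda for factor} only needs $2\Re(\zeta_n^j)$ to be a unit in $\mathcal{O}_{L,\mathfrak{p}}$ for $\mathfrak{p}\mid p$. For $k=12$, $p=2$ this still holds, because $\pm\sqrt3$ is a unit at every prime not above $3$. So the right repair is to state the lemma for $k\neq 12$ and add this local statement for $k=12$; Lemma~\ref{mu and lambda for factor} then goes through unchanged.
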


\begin{proof}
We start by rewriting
\[
2\Re(\zeta^j_n) = \zeta^{-j}_n(1+\zeta^{2j}_n).
\]
Since $\ord(\zeta^j_n)=k$, it follows that $\ord(\zeta^{2j}_n)=3p^r$.
Set $f_{1+\zeta^{2j}_n}(x)$ to be the minimal polynomial of $1+\zeta_n^{2j}$.
To prove our claim, we need to show that the constant term of this minimal polynomial is a unit.
Moreover, $f_{1+\zeta^{2j}_n}(x) = \Phi_{3p^r}(x-1)$ so we are interested in $\Phi_{3p^r}(-1)$.

If $p\neq 3$ we know
\[
\Phi_{3p^r}(x) = \frac{x^{3p^r} -1}{\frac{(x^{p^r}-1)(x^{3p^{r-1}}-1)}{(x^{p^{r-1}}-1)}}.
\]
Moreover, when $p=2$, the above expression simplifies and we get
\[
\Phi_{3\cdot 2^r}(x) = \frac{x^{3\cdot 2^{r-1}} +1}{x^{2^{r-1}} +1}.
\]

If $p=3$ we know that 
\[\Phi_{3^{r+1}}=x^{2\cdot3^r}+x^{3^r}+1.\]We check that $\abs{\Phi_{3p^r}(-1)}=1$.
This completes the proof.
\end{proof}

Before stating the next result, we need to introduce modified Iwasawa invariants in a slightly extended coefficient ring.
Let $L$ be a number field, let $\mathfrak p\mid p$ be a prime of $L$ above the rational prime $p$.
Let
\[
f(T)=\sum_{i\ge 0} a_iT^i \in \mathcal O_{L,\mathfrak p} \llbracket T \rrbracket
\]
be nonzero.
Then $\mu_{\mathfrak{p}}(f) = \min_i \val_{\mathfrak{p}}(a_i)$ where $\val_{\mathfrak{p}}$ is normalized by $\val_{\mathfrak{p}}(\mathfrak{p})=1/e(\mathfrak{p})$, where $e$ is the ramification index of $\mathfrak{p}$ in $\Q_p(\zeta_n)$.
If $\mu_{\mathfrak{p}}(f)=0$, then define $\lambda_{\mathfrak{p}}(f) = \min\{i \ : \ a_i \not\in \mathfrak{p}\}$.
When $f(T)\in \Z_p \llbracket T\rrbracket$, these definitions agree with the usual Iwasawa invariants, since $\mathfrak{p}\cap\Z_p=p\Z_p$.

\begin{lemma}
\label{mu and lambda for factor}
Fix $k=6p^r$ and $1\leq j \leq (n/2)-1$ such that $\ord(\zeta^j_n)=k$.
For every $\mathfrak{p}\mid p$ of $L=\Q(\zeta_n)$,
\begin{align*}
    \mu_{\mathfrak{p}}(2\Re(\zeta^j_n)T + 2\Re(\zeta^j_n)-1) & = 0\\
    \lambda_{\mathfrak{p}}(2\Re(\zeta^j_n)T + 2\Re(\zeta^j_n)-1) & =1. 
\end{align*}
\end{lemma}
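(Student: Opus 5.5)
The linear polynomial $f(T)=2\Re(\zeta^j_n)T + (2\Re(\zeta^j_n)-1)$ has only two coefficients: $a_1=2\Re(\zeta^j_n)$ and $a_0=2\Re(\zeta^j_n)-1$. Both claims follow once I show that $a_1$ is a $\mathfrak p$-adic unit and $a_0$ lies in $\mathfrak p$. Then $\mu_{\mathfrak p}(f)=\min(\val_{\mathfrak p}(a_0),\val_{\mathfrak p}(a_1))=0$. Moreover, the first index $i$ with $a_i\notin\mathfrak p$ is $i=1$, so $\lambda_{\mathfrak p}(f)=1$.

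\emph{Step 1: $a_1$ is a unit.} This is exactly Lemma~\ref{unit}. It says $2\Re(\zeta^j_n)$ is a unit in $\mathcal O_L$. Hence it is a unit in the completion $\mathcal O_{L,\mathfrak p}$ for every $\mathfrak p\mid p$, so $\val_{\mathfrak p}(a_1)=0$.

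\emph{Step 2: $a_0\in\mathfrak p$.} Write $\omega=\zeta^j_n$, a primitive $k$-th root of unity with $k=6p^r$. As in the proof of Lemma~\ref{resultant}, $a_0=\omega+\omega^{-1}-1=\omega^{-1}\Phi_6(\omega)=\omega^{-1}(\omega-\zeta_6)(\omega-\zeta_6^{-1})$. Set $\eta=\omega^{p^r}$. Since $\ord(\omega)=6p^r$, $\eta$ has order $6$, so $\eta\in\{\zeta_6,\zeta_6^{-1}\}$. Now use that $\omega^{p^r}\equiv \eta$ and that in characteristic $p$ the Frobenius is injective on the roots of unity of order prime to $p$ but collapses $p$-power ones.

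Concretely, write $\omega=\eta'\xi$ with $\eta'$ of order $6$ (if $p\neq 2,3$) and $\xi$ of order $p^r$. Then $\xi\equiv 1\pmod{\mathfrak p}$ for every $\mathfrak p\mid p$, because $1-\xi$ divides $p$ up to a unit. Hence $\omega\equiv \eta'\pmod{\mathfrak p}$ with $\eta'$ a primitive sixth root of unity. Therefore one of the factors $\omega-\zeta_6^{\pm1}$ lies in $\mathfrak p$, so $a_0\in\mathfrak p$.

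Alternatively, and uniformly in $p$, use the norm computation already recorded: the displayed evaluation of $P^{(k)}_n(0)$ gives the value $p^2$ for $k=6p^r$. So $N_{L^+/\Q}$ of the product of these $a_0$'s is a power of $p$ with positive exponent. Since the Galois conjugates $\omega^{j'}$, $\ord=k$, permute these factors transitively and $p$-adic valuation is Galois-equivariant, each conjugate, and in particular $a_0$, has positive valuation at every $\mathfrak p\mid p$.

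\emph{Main obstacle.} The delicate point is Step 2 when $p\in\{2,3\}$, where the decomposition $\omega=\eta'\xi$ is not available. For $p=3$, $k=2\cdot3^{r+1}$. For $p=2$, $k=3\cdot 2^{r+1}$. In both cases the residue of $\omega$ is a primitive root of order $2$ or $3$ rather than $6$. I would handle these cases by the norm/Galois-transitivity argument: $P^{(k)}_n(0)=p^2$ being a nontrivial $p$-power forces every conjugate factor $a_0$ into every prime above $p$. This works because the primes above $p$ are permuted transitively and the factors form a single Galois orbit, which is the fact I would verify carefully.
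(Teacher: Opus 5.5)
Your Step 2 for $p\ge 5$ is essentially the paper's argument. However, you handle $p\in\{2,3\}$ with the norm/Galois-transitivity step, and that step is invalid.

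\textbf{The gap.} From $P^{(k)}_n(0)=p^2$, the product of the $\varphi(k)/2$ conjugates $\sigma(a_0)$ is $\pm p$. For a fixed $\mathfrak p\mid p$ this gives $\sum_\sigma v_{\mathfrak p}(\sigma a_0)=\sum_\sigma v_{\sigma^{-1}\mathfrak p}(a_0)>0$. That only says $a_0$ lies in \emph{some} prime above $p$; Galois-equivariance merely relabels which one. Transitivity on primes plus a single orbit of factors does not help. In $\Z[i]$, $2+i$ and $2-i$ form one orbit with product $5$, and the Galois group swaps the two primes above $5$, yet $2+i\notin(2-i)$. The conclusion could be rescued by showing that $p$ has a single prime in $\Q(\zeta_k)^+$ for $k=6p^r$. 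That is true, but it needs a separate decomposition-group computation which you do not make, so the cases $p=2,3$ are unproven as written.

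A similar problem affects your justification of $\xi\equiv 1$. The fact ``$1-\xi$ divides $p$'' does not put $1-\xi$ in every $\mathfrak p\mid p$. Use instead that $p$ is a unit times $(1-\xi)^{\varphi(p^r)}$. Alternatively, note that $\bar\xi^{p^s}=1$ (with $p^s$ the order of $\xi$) gives $(\bar\xi-1)^{p^s}=0$ in characteristic $p$.

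\textbf{The fix.} Contrary to what you say, the decomposition $\omega=\eta'\xi$ is available for $p=2,3$ as well. For $p=2$, take $\eta'$ of order $3$ and $\xi$ of order $2^{r+1}$. For $p=3$, take $\eta'=-1$ and $\xi$ of order $3^{r+1}$. Then $a_0\equiv\eta'+\eta'^{-1}-1\pmod{\mathfrak p}$. The quantity $\eta'+\eta'^{-1}-1$ equals $0$, $-2$, $-3$ when $\eta'$ has order $6$, $3$, $2$ respectively, so it lies in $\mathfrak p$ in every case. This is exactly the paper's proof: reduce $\zeta^j_n$ modulo $\mathfrak p$ to a root of unity whose order is the prime-to-$p$ part of $6p^r$, and check the three cases.

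\textbf{Step 1 is then free.} Once $a_0\in\mathfrak p$, you get $a_1=a_0+1\equiv 1\pmod{\mathfrak p}$, so $a_1$ is a $\mathfrak p$-adic unit. This route is worth taking. Lemma~\ref{unit}, on which your Step 1 (and implicitly the paper's first line) relies, is false as a statement about units of $\mathcal O_L$ when $k=12$. There $2\Re(\zeta_{12})=\sqrt3$ and $\Phi_6(-1)=3$, so $\sqrt3$ is a unit at primes above $2$ but not a global unit.
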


\begin{proof}
For ease of notation set $f_j(T)=2\Re(\zeta^j_n)T + 2\Re(\zeta^j_n)-1$.
Note that $2\Re(\zeta^j_n) = \zeta^j_n + \zeta^{-j}_n$ is a unit in $\mathcal{O}_{L,\mathfrak{p}}$.
Since the coefficient of $T$ in $f_j(T)$ is a $\mathfrak{p}$-adic unit, the $\mu_{\mathfrak{p}}$-invariant must be 0.

The residue field module $\mathfrak{p}$ has characteristic $p$ and its multiplicative group has order prime-to-$p$.
Reducing $\zeta_n^j$ modulo $\mathfrak{p}$ removes the $p$-power part of its order, i.e., $\overline{\zeta_n^j}$ has order equal to prime-to-$p$ part of $6p^r$.

If $p\neq 2,3$ then $\overline{\zeta_n^j}$ has order $6$ and is a primitive $6$-th root of unity.
Thus, $\zeta_n^j + \zeta_n^{-j} - 1 \in \mathfrak{p}$.
When $p=2$, then $\overline{\zeta_n^j}$ has order $3$.
Since the characteristic is 2, we know that $+1 = -1$ and once again $\zeta_n^j + \zeta_n^{-j} - 1 \in \mathfrak{p}$.
Finally, when $p=3$, we note $\overline{\zeta_n^j} = -1$ has order 2.
This again means $\zeta_n^j + \zeta_n^{-j} - 1 \in \mathfrak{p}$.
This shows that
\[
f_j(T) \equiv (\zeta_n^j + \zeta_n^{-j})T \pmod{\mathfrak{p}},
\]
where $\zeta_n^j + \zeta_n^{-j}$ is a unit.
Hence the reduction of $f_j(T)$ has a zero of order exactly $1$ at $T=0$.
This completes the proof of the lemma.
\end{proof}

The above lemma takes into account a single linear factor.
If we are interested in the Iwasawa invariant of the product $P_n^{(k)}(T)$, then we need to further note that $f_j$'s are Galois conjugates.
In other words, the product $P_n^{(k)}(T)$ is Galois-stable and must have integral coefficients.
Therefore
\[
\lambda_p(P_n^{(k)}(T)) = \sum_{j} \lambda_{\mathfrak{p}}(f_j(T)^2) = 2 \times \#\{ j \ : \ 1\leq j \leq (n/2)-1, \ \ord(\zeta^j_n)=k\} = 2 \times \frac{\varphi(k)}{2} = \varphi(k).
\]

\begin{proposition}
\label{prop 3.12}
Consider the Cayley graph $X = X(D_{2n},S)$ where $S=\{\sigma, \tau\}$.
Suppose that $6\mid n$.
The invariant $\mu_p(\BF(X_\infty))=0$ for all primes $p$ and
\[
\lambda_p(\BF(X_\infty)) = \begin{cases}
    0 & \text{ if } p\nmid n\\
    2^{\val_2(n)} & \text{ if } p=2\\
    3^{\val_3(n)} & \text{ if } p=3\\
    2p^{\val_p(n)} & \text{ if } p\mid n \text{ and } \gcd(p,6)=1.    
\end{cases}
\]
\end{proposition}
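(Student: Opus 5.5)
The plan is to read off the invariants factor by factor from the characteristic element. As recalled above, $\BF(X_\infty)$ is the cokernel of $\Id-\theta A$ on $\Div(X_\infty)\simeq\Lambda^{2n}$. Its characteristic element is therefore, up to a unit, the polynomial \eqref{det polynomial}. Since $\mu$ and $\lambda$ are additive over products, I will use the decomposition
\[
(-2T-1)(2T+3)\,T^2\,P^{(3)}_n(T)\prod_{\substack{k\mid n\\ k\neq 2,3,6}}P^{(k)}_n(T).
\]
I will then compute $\mu_p$ and $\lambda_p$ of each factor for a fixed prime $p$ and add them up. Every factor has a unit coefficient of $T$ or of its top power, so $\mu_p=0$ will be immediate throughout. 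For the factors $P^{(k)}_n$ I argue over $\mathcal O_{L,\mathfrak p}$ exactly as in Lemma~\ref{mu and lambda for factor} and then descend to $\Z_p$ by Galois stability.

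\textbf{Step 1: the elementary factors.} The factor $-2T-1$ has constant term $-1$, so it is a unit for every $p$. The factor $2T+3$ is a unit unless $p=3$. For $p=3$ its $T$-coefficient is a unit, so it contributes $\lambda=1$. The factor $P^{(3)}_n=(T+2)^2$ is a unit unless $p=2$. For $p=2$ it reduces to $T^2$ modulo $2$ and contributes $\lambda=2$. The factor $T^2$ contributes $\lambda=2$ for every $p$.

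\textbf{Step 2: the cyclotomic blocks.} By Lemma~\ref{resultant}, the constant term of $P^{(k)}_n$ is a resultant $\operatorname{Res}(\Phi_k,\Phi_6)$. By the case list derived from \cite[Theorem~4]{apostol1970resultants}, this is a unit at $p$ unless $k=6p^r$, and $r\ge1$ is forced since $k\neq 6$. For $k=6p^r$ with $6p^r\mid n$, the discussion after Lemma~\ref{mu and lambda for factor} gives $\lambda_p(P^{(k)}_n)=\varphi(6p^r)$. Lemma~\ref{unit} supplies the unit leading coefficient needed there. The admissible range is $1\le r\le \val_p(n)-\val_p(6)$.

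\textbf{Step 3: summing.} For $p\ge5$ with $p\mid n$, the total is
\[
2+\sum_{r=1}^{v}2(p-1)p^{r-1}=2p^{v},\qquad v=\val_p(n).
\]
For $p=3$, the total is
\[
2+1+\sum_{r=1}^{v-1}2\cdot3^{r}=3^{v}.
\]
For $p=2$, the total is
\[
2+2+\sum_{r=1}^{v-1}2^{r+1}=2^{v+1}.
\]
By Proposition~\ref{prop 3.2}, for $p=2$ and $n$ even every layer splits into two isomorphic components. Following the convention of restricting to the $\Z_p$-tower above one component, the invariant is halved, giving $2^{v}$.

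\textbf{Main obstacle.} The delicate points are bookkeeping ones. The first is the correct range of $r$, which is $6p^r\mid n$, and the count $\varphi(k)/2$ of indices $j\le n/2-1$ of order $k$. The second is how the factor $T^2$ (the infinite part) and the two-component phenomenon at $p=2$ enter the normalization. In particular, for $p\nmid n$ the only surviving non-unit factor is $T^2$. To match the stated value $0$, I will check whether the intended convention discards the $T^2$ contribution coming from $\ker(\Id-A)$. For primes dividing $n$, the computation above shows that $T^2$ must be counted to obtain the formulas in the statement. I would first verify the whole computation numerically for $n=6,12,30$ before fixing the convention.
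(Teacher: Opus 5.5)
You follow the paper's own proof. It uses the same factorization of \eqref{det polynomial} and the same use of Lemma~\ref{resultant} and Apostol's resultant formula to isolate the blocks $k=3$ and $k=6p^r$. It then applies Lemma~\ref{mu and lambda for factor} plus Galois stability to get $\lambda_p(P^{(6p^r)}_n)=\varphi(6p^r)$, and halves at $p=2$ via Proposition~\ref{prop 3.2}. Your totals $2p^{v}$, $3^{v}$ and $\tfrac12\cdot 2^{v+1}=2^{v}$ for $p\mid n$ are exactly the paper's. One small imprecision: for the blocks with $k\notin\{3,6p^r\}$, it is the constant term, not the $T$-coefficient or the leading coefficient, that is a unit. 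For example, at $p=2$ one has $P^{(8)}_n=(1+4T+2T^2)^2$. This still gives $\mu_p=0$.

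The point you left open should not be settled by looking for a convention. Your computation is right, and the stated value for $p\nmid n$ is wrong: it should be $\lambda_p=2$.

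\begin{itemize}
\item No normalization rescues the $0$. The formulas for $p\mid n$ require counting $T^2$: the paper's own sums are $2+\cdots$, $3=2+1$ and $4=2+2$. Dropping $T^2$ would turn them into $2p^{v}-2$, $3^{v}-2$ and $2^{v}-1$.
\item The paper's justification for $p\nmid n$ is Lemma~\ref{lemma to cite in 3.11}. That lemma says precisely that the $T^2$-coefficient $-3P_n(0)$ is a $p$-adic unit, which means $\lambda_p=2$.
\item Structurally, the argument of Lemma~\ref{Nakayama} gives $\BF(X_\infty)/T\BF(X_\infty)\cong\BF(X)\otimes\Z_p$. By Proposition~\ref{defect base}, this has $\Z_p$-rank $b(X)=2$. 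Since $\mu_p=0$, $\lambda_p$ is the $\Z_p$-rank of $\BF(X_\infty)$, so $\lambda_p\geq 2$ for every $p$.
\item Concretely, for $n=6$ the characteristic element is $(-2T-1)(2T+3)T^2(T+2)^2$, and $\lambda_5=2$.
\end{itemize}

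So your outline proves the proposition once the first case is corrected to $\lambda_p(\BF(X_\infty))=2$ for $p\nmid n$. As literally stated, that case is false and cannot be proved.
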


\begin{proof}
To calculate the Iwasawa invariants of the Bowen--Franks group we look at the $\mu$ and $\lambda$ invariants of each linear factor appearing on the right side of \eqref{det polynomial}.
A linear factor which is a unit has a trivial contribution to both the invariants.
We have seen previously that each for each prime $p$, the linear factors all have $\mu$-invariant 0.
This shows $\mu_p(\BF(X_\infty))=0$.

To compute the $\lambda$-invariant at the prime $p=2$, we consider
\begin{align*}
U\det(\Id - (T+1)A)  & \equiv T^2 P^{(3)}_n(T) \prod_{\substack{k\mid n\\ k=6\cdot 2^r}} P^{(k)}_{n}(T) \pmod{2}\\
& \equiv T^4 \prod_{\substack{k\mid n\\ k=6\cdot 2^r}} P^{(k)}_{n}(T) \pmod{2}
\end{align*} for some unit $U$. 
By Lemma~\ref{mu and lambda for factor}, we have $\lambda_2 ((2\Re(\zeta^j_n)T + 2\Re(\zeta^j_n)-1)^2)=2$.
The number of $j$ for which $\ord(\zeta^j_n) = k = 6\cdot 2^r$ is $\varphi(k)/2$.
Furthermore, we have to divide by $2$ to take into account the number of connected components; c.f., Proposition~\ref{prop 3.2}.
Putting this all together, we get
\[
\lambda_2(\BF(X_\infty)) =\frac{1}{2}\left( 4 + \displaystyle\sum_{r=1}^{\operatorname{val}_2(n)-1}\varphi(6\cdot 2^r)\right).
\]
The calculations for $p=3$ and $p\mid n$ but $\gcd(p,6)=1$ are similar.
In particular, we obtain
\[
\lambda_p(\BF(X_\infty)) = \begin{cases}
    0 & \text{ if } p\nmid n\\
    2+\frac{1}{2} \displaystyle\sum_{r=1}^{\operatorname{val}_2(n)-1} \varphi(6\cdot 2^r) & \text{ if } p=2\\
    3+ \displaystyle\sum_{r=1}^{\operatorname{val}_3(n)-1}\varphi(6\cdot 3^r) & \text{ if } p=3\\
    2+ \displaystyle\sum_{r=1}^{\operatorname{val}_p(n)}\varphi(6 p^r) & \text{ if } p\mid n \text{ and } \gcd(p,6)=1.    
\end{cases}
\]
The result follows from simplifying this expression.
When $p\nmid n$, the claim follows from Lemma~\ref{lemma to cite in 3.11}.
\end{proof}

\subsubsection{Defect}
We show that the conjecture of Lei--M{\"u}ller is true for Cayley graphs of dihedral groups.

\begin{proposition}
\label{defect base}
Consider the Cayley graph $X = X(D_{2n},S)$ where $S=\{\sigma, \tau\}$.
Then the defect $\delta(X)=0$.
\end{proposition}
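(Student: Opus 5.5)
The plan is to use the block decomposition \eqref{adjacency} of $A=\rho(\sigma)+\rho(\tau)$ coming from the regular representation. Over $\C$ (and algebraic and geometric multiplicities can be computed over $\C$ since $A$ has integer entries), $A$ is conjugate to a block diagonal matrix. The blocks are the $1\times 1$ blocks for the one-dimensional representations, with eigenvalues $2,0$ (resp. $2,-2,0,0$), together with two copies of $M_{\rho_j}=\begin{bmatrix}\zeta_n^j & 1\\ 1 & \zeta_n^{-j}\end{bmatrix}$ for each $j$. Since the defect $\delta(X)=a(X)-b(X)$ is additive over such a direct sum decomposition, it suffices to show that for each block the algebraic and geometric multiplicities of the eigenvalue $1$ agree. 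For the one-dimensional blocks this is trivial: the eigenvalues are $2,0,-2$, and $1$ never occurs.

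For the two-dimensional blocks, the characteristic polynomial of $M_{\rho_j}$ is $x^2-(\zeta_n^j+\zeta_n^{-j})x$, since $\det M_{\rho_j}=1-1=0$ and the trace is $2\Re(\zeta_n^j)$. Its eigenvalues are therefore $0$ and $2\Re(\zeta_n^j)$. The eigenvalue $1$ occurs only when $2\Re(\zeta_n^j)=1$, i.e.\ when $\zeta_n^j$ is a primitive sixth root of unity. This forces $6\mid n$ and $j=n/6$, with $j\le n/2-1$ so that we are in the range of $2$-dimensional representations. Otherwise $1$ is not an eigenvalue at all, so $a(X)=b(X)=0$; this matches Proposition~\ref{prop 3.3}, which says $\BF(X)$ is finite precisely when $6\nmid n$.

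In the remaining case $6\mid n$, $j=n/6$, the block $M_{\rho_j}$ has two distinct eigenvalues $0$ and $1$, so it is diagonalizable. The eigenvalue $1$ therefore has algebraic and geometric multiplicity $1$ in each copy. Since $\rho_{n/6}$ appears with multiplicity $\dim\rho_{n/6}=2$, we get $a(X)=b(X)=2$, and hence $\delta(X)=0$. One can cross-check this against \eqref{char A}: the factor $(x-1)^2$ appears exactly once, coming from $j=n/6$.

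The only point needing care is that the decomposition \eqref{adjacency} is a genuine similarity $A\sim\bigoplus\dim(\rho_i)M_{\rho_i}$ over $\C$, and not just an equality of characteristic polynomials. Geometric multiplicity is invariant under similarity and additive over direct sums, so with that in hand everything else reduces to the $2\times 2$ computation above. The similarity follows from Maschke's theorem, since $\C[G]\cong\bigoplus\rho_i^{\dim\rho_i}$ as right $G$-modules and $A$ acts as $\sum_{s\in S}\rho(s)$.
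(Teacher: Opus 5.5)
Your proposal is correct and follows essentially the same route as the paper. You decompose $A$ via the regular representation, observe that the eigenvalue $1$ can only come from the two copies of the block $M_{\rho_{n/6}}$ when $6\mid n$, and conclude $a(X)=b(X)=2$ (and $a(X)=b(X)=0$ when $6\nmid n$). Your explicit remarks make precise the step the paper states only briefly: that $M_{\rho_{n/6}}$ has the distinct eigenvalues $0$ and $1$, hence is diagonalizable, and that the block decomposition is a genuine similarity.
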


\begin{proof}
Recall that $1$ is an eigenvalue of the adjacency matrix $A=A_X$ if and only if $6\mid n$.
So, when $6\nmid n$, it is trivially true that $\delta(X)=0$.
On the other hand, when $6\mid n$, we check from $\Char_A(x)$ calculated in \eqref{char A} that the eigenvalue 1 has multiplicity $2$, i.e.~the algebraic rank $a(X)=2$.
From the representation theory description of $A$, 
we know that each contribution of eigenvalue 1 in this case comes from a 2-dimensional representation with the corresponding block matrix
\[
M_{\rho_{n/6}} = \begin{bmatrix}
    \zeta^{n/6}_n & 1\\
    1 & \zeta^{5n/6}_n
\end{bmatrix}
\]
which has characteristic polynomial $x(x-2\Re(\zeta^{n/6}_n))$.
But there are two such blocks in the decomposition of $A$ (since this is a 2-dimensional representation) -- the eigenvectors corresponding to eigenvalue 1 are thus linearly independent.
This shows that the geometric rank $b(X)=2$.
The defect $\delta(X) = a(X)-b(X)=0$.
\end{proof}

\begin{theorem}
\label{thm: defect conj for dihedral}
Consider the Cayley graph $X = X(D_{2n},S)$ where $S=\{\sigma, \tau\}$.
For each layer $X_m$ of the constant $\Z_p$-extension of $X$, the defect $\delta(X_m)=0$ for all $m\geq 0$.
\end{theorem}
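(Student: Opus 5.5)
Using Lemma~\ref{lemma 2.13}, the defect of $X_m$ splits as
\[
\delta(X_m)=\delta(X)+\sum_{k=1}^{p^m-1}\Bigl(\text{alg.\ mult.\ of }\zeta_{p^m}^k\text{ as a root of }\det(\Id-\theta A)\;-\;\dim\ker(\Id-\zeta_{p^m}^kA)\Bigr).
\]
Proposition~\ref{defect base} gives $\delta(X)=0$. It therefore suffices to show that for every root of unity $\xi=\zeta_{p^m}^k\neq 1$, the algebraic multiplicity of $\xi$ as a root of $\det(\Id-\theta A)$, viewed as a polynomial in $\theta$, equals $\dim\ker(\Id-\xi A)$. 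The multiplicity of $\xi$ as a root in $\theta$ is the algebraic multiplicity of the eigenvalue $\xi^{-1}$ of $A$, because the nonzero eigenvalues $\lambda_i$ of $A$ give the factors $(1-\theta\lambda_i)$. Likewise $\dim\ker(\Id-\xi A)$ is the geometric multiplicity of $\xi^{-1}$ as an eigenvalue of $A$. So the whole statement reduces to: \emph{every nonzero eigenvalue of $A$ of the form $\xi^{-1}$, with $\xi$ a $p$-power root of unity, has equal algebraic and geometric multiplicity.} (If the tower is disconnected, as in Proposition~\ref{prop 3.2} for $p=2$ and $n$ even, one applies the same argument to each component. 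The components are isomorphic, so $a$ and $b$ both scale by the number of components and the defect stays $0$.)

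To see which eigenvalues occur, use \eqref{char A}. The nonzero eigenvalues of $A$ are $2$, $-2$ (when $n$ is even), and $2\Re(\zeta_n^j)$. These are all real, so the only roots of unity among them are $\pm1$. Since $\xi\neq1$ is a $p$-power root of unity, $\xi^{-1}$ real forces $\xi^{-1}=-1$, which needs $p=2$. For $p$ odd every summand above vanishes, and $\delta(X_m)=\delta(X)=0$ immediately.

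For $p=2$ the remaining check is that the eigenvalue $-1$ of $A$ is semisimple. By \eqref{char A}, $-1$ occurs exactly when $2\Re(\zeta_n^j)=-1$, i.e.\ $j=n/3$ with $3\mid n$. I would repeat the argument of Proposition~\ref{defect base}: this eigenvalue comes from the two copies of the block $M_{\rho_{n/3}}$ in \eqref{adjacency}. Each block has characteristic polynomial $x(x+1)$, so its eigenvalues $0$ and $-1$ are distinct and each block is diagonalizable. Hence the eigenvalue $-1$ has algebraic multiplicity $2$ and geometric multiplicity $2$.

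One point needs care. When $n$ is even, $-1$ must not also arise from $\operatorname{sign}_\sigma$ or $\operatorname{sign}_{\sigma,\tau}$; these contribute only $0$ and $-2$, so it does not. More generally, one should confirm that the decomposition \eqref{adjacency} is a genuine similarity over $\C$ (conjugation of $\rho$ into block form), so that geometric multiplicities can be read off blockwise. With that in hand, each summand in Lemma~\ref{lemma 2.13} contributes $0$ to the defect, and $\delta(X_m)=0$ for all $m$.
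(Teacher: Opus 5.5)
Your proposal is correct and follows essentially the same route as the paper. Both arguments reduce via Lemma~\ref{lemma 2.13}, use the reality of the nonzero eigenvalues of $A$ to see that only $\xi=-1$ (so $p=2$ and $j=n/3$) can contribute, and then verify that the eigenvalue $-1$ is semisimple blockwise; your explicit checks that $M_{\rho_{n/3}}$ has characteristic polynomial $x(x+1)$ with distinct roots, and that the one-dimensional representations never give $-1$, spell out what the paper compresses into ``the block matrix argument in Proposition~\ref{defect base}''.
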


\begin{proof}
The case $m=0$ is addressed in the previous result.
Fix $m>0$.
Observe that $a(X_m) \geq a(X)$.
If equality holds, there is nothing to show.
Otherwise, there exists a (non-trivial) $p$-power root of unity which appears as an eigenvalue of $A$.
In particular,
\[
\det(\Id-\zeta^k_{p^m}A)=0 \text{ for some }k\leq p^m.
\]
We know from \eqref{det(I - (T+1)A)} that $\det(\Id-\theta A)$ is a product of linear factors with real coefficients; so the only $p$-power roots of unity that can appear as eigenvalues of $A$ are $\pm 1$.
This forces $p=2$.

The case of $\theta= 1$ corresponds to the base layer which we have considered in Proposition~\ref{defect base}.
If we require that $\det(\Id+A)=0$, this happens precisely when $\zeta^j_n = \zeta_3$, 
i.e.~if and only if $3\mid n$ and $j=n/3$.
Note that we showed in Lemma~\ref{lemma 2.13} that
\begin{align*}
a(X_m) & = a(X) + \sum_{k=1}^{p^m-1}\text{algebraic multiplicity of $\zeta_{p^m}^k$ as a root of $\det(\Id-\theta A)$}\\
b(X_m) & = b(X) + \sum_{k=1}^{p^m-1}\dim_{\Q(\zeta_{p^m}^k)}(\ker(\Id-\zeta_{p^m}^k A)).
\end{align*}    
But 
\[
\dim(\ker(\Id -\zeta_{p^m}^k A)) = \text{algebraic multiplicity of $\zeta_{p^m}^k$ as a root of $\det(\Id-\theta A)$}
\]
by the block matrix argument in Proposition~\ref{defect base}.
\end{proof}

\subsection{Cayley graphs of \texorpdfstring{$\Z/p\Z\rtimes \Z/(p-1)\Z$}{}}
\label{sec: semi direct}

Fix an odd prime $p$ and {a primitive root $a$ modulo $p$}.
Consider the group of order $p(p-1)$ with presentation 
\[
G = \langle \sigma, \tau \mid \sigma^p = \tau^{p-1} = e, \tau \sigma \tau^{-1} = \sigma^{a}\rangle {\simeq \Z/p\Z\rtimes \Z/(p-1)\Z}.
\]
We consider the Cayley graph $X=X(G,S)$ where $S=\{\sigma, \tau\}$.

The strategy is similar to the previous class of examples and we use \eqref{adjacency} to construct the adjacency matrix.
There are $(p-1)$-many $1$-dimensional representations and each such representation contributes a $1\times 1$ block to the adjacency matrix $A=A_X$.
On the other hand, {there is a unique} $(p-1)$-dimensional representation ${\rho}$.
{The dimension check is
\[
(p-1)\cdot 1^2 + 1 \cdot (p-1)^2 = p(p-1).
\]}
{Since an irreducible representation occurs in the regular representation with multiplicity equal to its dimension, $\rho$ occurs with multiplicity $p-1$.
Hence $A$ contains $p-1$ copies of the block $M_{\rho}$ given by
\[
M_{\rho} = \begin{bmatrix}
    \zeta_p & 1 & 0 & \ldots & 0 \\
     0  &  \zeta^{a}_p & 1 & \ldots & 0 \\
     \vdots & \ddots & \ddots & \ddots & \vdots\\
     1 & 0 & 0 & \ldots & \zeta^{a^{p-2}}_p
\end{bmatrix}.
\]}
We calculate the characteristic polynomial
\[
\Char_{M_{\rho}}(x) = \det( x\Id-M_{\rho}) = \Phi_p(x) - 1 = x + x^2 + \ldots + x^{p-1}.
\]
Here, $\Phi_p(x)$ is the $p$-th cyclotomic polynomial.
The roots of the characteristic polynomial are easily seen to be $0$ or $\zeta^k_{p-1}$ where $1 \leq k \leq p-2$.
{In particular, all roots are distinct, so $M_\rho$ is diagonalizable.
Now, each one-dimensional block is of course diagonalizable, and $M_\rho$ is diagonalizable; so, the entire adjacency matrix $A$ is diagonalizable.
Consequently, every eigenvalue of $A$ has equal algebraic and geometric multiplicity.}
We conclude that $\delta(X)=0$.
Writing $X_m$ to denote the Cayley graph corresponding to the $m$-th layer of the $\Z_p$-extension of $X$ and {using Lemma~\ref{lemma 2.13}} we know that {$a(X_m)$ and $b(X_m)$ must agree term-by-term.
Thus,} $\delta(X_m)=0$ for all $m\geq 0$.

Next we want to compute the Iwasawa invariants.
The strategy is the same as the one we have used in all the previous examples.
We have
\[
\det(\Id - \theta A) = \prod_{k=1}^{p-2} (1 - \theta \zeta^k_{p-1})^{p-1} \prod_{k=1}^{p-1} (1 - (1+\zeta^k_{p-1})\theta).
\]
Now replacing $\theta = T+1$, we obtain
\begin{align*}
\det(\Id - (T+1) A) &= \prod_{k=1}^{p-2} (1 - (T+1) \zeta^k_{p-1})^{p-1}\prod_{k=1}^{p-1} (1 - (1+\zeta^k_{p-1})(T+1))\\ 
&= \prod_{k=1}^{p-2} (1 - (T+1) \zeta^k_{p-1})^{p-1} \prod_{k=1}^{p-1} (-(1+\zeta^k_{p-1})T - \zeta^k_{p-1}) .
\end{align*}
Since the constant term arises in the second product is $\zeta^k_{p-1}$ and we note that it is always a unit, this means its contribution to the Iwasawa invariant is 0 for every prime $q$.
We only need to focus on the first product.

Observe that
\[
1 - (1+T) \zeta^k_{p-1} = (1 - \zeta^k_{p-1}) - \zeta^k_{p-1}T.
\]
The coefficient of $T$ is always a unit.
This gives $\mu_q=0$ for every prime $q$.

{The factor has positive $\lambda_q$-invariant  precisely when $1-\zeta^k_{p-1}$ is a non-unit at a prime above $q$.}
The element $1-\zeta_{p-1}^k$ is a unit in the ring of integers of $\mathbb{Q}(\zeta_{p-1})$ if and only if the integer $d= \frac{p-1}{\gcd (p-1,k)}$ has \textit{at least} two distinct prime factors.
So the only primes with a positive $\lambda$-invariant are the primes $q$ for which there are non-negative integers $k$ and $s$ satisfying 
\[
\frac{p-1}{\gcd (p-1,k)} = q^s.
\]
This is the case if and only if $q$ is a divisor of $p-1$. For each such choice of $k$ we obtain
\[\lambda_q(\pm \zeta_{p-1}^kT+\zeta_{p-1}^k-1)=1\]
And these are the only terms that contribute to $\lambda_q$. 
For each $q^s\mid p-1$, there are $\varphi(q^s)$ different choices of $k$. For $q\mid p-1$, we therefore obtain 
\[
\lambda_q = (p-1)\sum_{t=1}^{\val_q(p-1)} \varphi(q^t)=(p-1)(q^{\val_q(p-1)}-1).
\]

{The above discussion can be summarized as follows.}

\begin{proposition}
Let $p>2$ be a prime and let $a$ be a primitive root $p$.
Consider the group
\[
G = \langle \sigma, \tau \mid \sigma^p = \tau^{p-1} = e, \tau \sigma \tau^{-1} = \sigma^a\rangle \simeq \Z/p\Z\rtimes \Z/(p-1)\Z.
\]
Let $X=X(G,\{\sigma, \tau\})$ be the associated Cayley graph.
Then $\delta(X_m)=0$ for every layer $X_m$ of the constant tower.
Moreover, $\mu_q(\BF(X_\infty)) =0$ for every prime $q$
and 
\[
\lambda_q(\BF(X_\infty)) = \begin{cases}
    (p-1)(q^{\val_q(p-1)}-1) & \text{ if } q\mid p-1\\
    0 & \text{ if } q\nmid p-1.
\end{cases}
\]
\end{proposition}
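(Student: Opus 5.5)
We follow the same route as in the preceding discussion, which this proposition summarises. First we decompose the adjacency operator via \eqref{adjacency}. The group $G$ has $p-1$ one-dimensional characters, given by $\sigma\mapsto 1$ and $\tau\mapsto \zeta_{p-1}^k$; these contribute the eigenvalues $1+\zeta_{p-1}^k$. It also has a unique $(p-1)$-dimensional irreducible $\rho=\operatorname{Ind}$ of a nontrivial character of $\langle\sigma\rangle$, which appears $p-1$ times in the regular representation. In a basis adapted to the cosets of $\langle\sigma\rangle$, $\rho(\sigma)$ is diagonal with entries $\zeta_p^{a^i}$ and $\rho(\tau)$ is a cyclic permutation matrix, which gives $M_\rho$.

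Next we compute $\Char_{M_\rho}(x)$ by expanding the determinant of a diagonal-plus-cyclic-shift matrix. The expansion gives $\prod_i(x-\zeta_p^{a^i}) - 1$. Since $a$ is a primitive root, this equals $\Phi_p(x)-1=x+\dots+x^{p-1}$. Its roots are $0$ and the nontrivial $(p-1)$-st roots of unity, which are pairwise distinct. Hence $M_\rho$ is diagonalizable, and therefore $A$ is diagonalizable.

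For the defect we use Lemma~\ref{lemma 2.13}. Because $A$ is diagonalizable, so is $\zeta A$ for every root of unity $\zeta$. Thus for each $\zeta_{p^m}^k$ the dimension of $\ker(\Id-\zeta_{p^m}^kA)$ equals the algebraic multiplicity of $\zeta_{p^m}^{-k}$ as an eigenvalue of $A$, which is the multiplicity of the corresponding root of $\det(\Id-\theta A)$. The sums defining $a(X_m)$ and $b(X_m)$ therefore agree term by term, so $\delta(X_m)=0$ for every $m$, including $m=0$. This argument does not need the strong connectivity of the layers, but we would still remark that the $\Z_p$-tower is taken over a connected component, as in the earlier sections.

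For the Iwasawa invariants we write
\[
\det(\Id-(T+1)A)=\prod_{k=1}^{p-2}\bigl((1-\zeta_{p-1}^k)-\zeta_{p-1}^kT\bigr)^{p-1}\prod_{k=0}^{p-2}\bigl(1-(1+\zeta_{p-1}^k)(1+T)\bigr),
\]
the factor $T^{\cdot}$ from the eigenvalue $0$ being trivial. Each factor in the second product has constant term $-\zeta_{p-1}^k$, which is a unit, so it contributes nothing. Note that the $k=0$ factor gives constant term $-1$, so $\BF(X)$ is finite. In the first product every linear factor has unit $T$-coefficient, so $\mu_q=0$ for all $q$ by the same $\mathfrak q$-adic argument as in Lemma~\ref{mu and lambda for factor}. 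The factor with index $k$ has $\lambda_{\mathfrak q}=1$ exactly when $1-\zeta_{p-1}^k\in\mathfrak q$, and $\lambda_{\mathfrak q}=0$ otherwise. By the classical fact that $1-\zeta_d$ is a unit unless $d$ is a prime power $q^s$, in which case its norm is $q$, this happens exactly when $\operatorname{ord}(\zeta_{p-1}^k)=q^s$ with $s\ge1$. Counting the $k$ with this order gives $\varphi(q^s)$ for each $s\le \val_q(p-1)$; these sum to $q^{\val_q(p-1)}-1$. Multiplying by the exponent $p-1$ yields the stated $\lambda_q$, and $\lambda_q=0$ when $q\nmid p-1$.

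\textbf{Main obstacle.} The delicate step is passing from the $\mathfrak q$-adic invariants of the individual conjugate factors to $\lambda_q$ of the integral product. We plan to group the factors into Galois orbits, observe that each orbit product lies in $\Z_q[T]$, and use that $\lambda$ is additive over the factors in $\mathcal O_{L,\mathfrak q}\llbracket T\rrbracket$. This works because every factor has $\mu=0$, so $\lambda$ is the order of vanishing of the reduction modulo $\mathfrak q$, and the reduction of the product is the product of the reductions.
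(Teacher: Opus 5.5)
Your proposal is correct and follows the paper's argument essentially step by step. It uses the same block decomposition of $A$ and the same computation $\Char_{M_\rho}(x)=\Phi_p(x)-1$, whose distinct roots give diagonalizability and hence term-by-term agreement in Lemma~\ref{lemma 2.13}. It then uses the same factorization of $\det(\Id-(T+1)A)$, with unit $T$-coefficients and the prime-power criterion for $1-\zeta_{p-1}^k$.

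Your explicit determinant expansion for $M_\rho$, and your reduction-mod-$\mathfrak q$ argument that $\lambda$ is additive over the conjugate factors, make rigorous two steps the paper only asserts. The one slip is the side remark that the $k=0$ factor alone makes $\BF(X)$ finite. That conclusion needs all constant terms ($1-\zeta_{p-1}^k$ for $k\neq 0$ and $-\zeta_{p-1}^k$) to be nonzero, and it is not needed for the statement anyway.
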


\subsection{Abelian groups}
\label{sec: abelian}
Let $G$ be a finite abelian group with generating set $S$.
We consider the associated directed Cayley graph $X = X(G,S)$.
In this section, we only focus on the Defect Conjecture. 
The calculation of the Iwasawa invariants in full generality is significantly more tedious than in special cases we have worked out previously.
These calculations are being carried out in a separate project by the fourth named author.
In a different setting, the Iwasawa invariants of undirected Cayley graphs of abelian groups have been studied in \cite{ghosh2025iwasawa}.

\begin{theorem}
\label{thm: defect conj for abelian}
Let $G$ be a finite abelian group.
Let $X = X(G,S)$ where $S$ is the generating set of $G$.
In the constant $\Z_p$-tower of $X$, the defect $\delta(X_m)=0$ for all $m\geq 0$.
\end{theorem}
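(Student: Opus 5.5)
The plan is to show that the adjacency matrix $A=A_X$ is diagonalizable over $\C$, and then use Lemma~\ref{lemma 2.13} to transport equality of algebraic and geometric multiplicities to every layer $X_m$, as was done for the groups $\Z/p\Z\rtimes\Z/(p-1)\Z$.

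Since $G$ is abelian, every irreducible complex representation is a character $\chi\colon G\to\C^\times$. By \eqref{adjacency}, $A=\sum_{s\in S}\rho(s)$ is conjugate to the diagonal matrix with entries $\lambda_\chi=\sum_{s\in S}\chi(s)$, one for each of the $\abs{G}$ characters $\chi$. Concretely, the vectors $v_\chi=\sum_{g\in G}\chi(g)e_g$ are eigenvectors of $A$ with eigenvalue $\lambda_\chi$, and they form a basis of $\C[G]$ by orthogonality of characters. So $A$ is diagonalizable over $\C$, and every eigenvalue $\lambda$ has equal algebraic and geometric multiplicity, namely $\#\{\chi:\lambda_\chi=\lambda\}$. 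Taking $\lambda=1$ gives $a(X)=b(X)$, hence $\delta(X)=0$. Here $b(X)$ is a rank over $\Q$, but the dimension of $\ker(\Id-A)$ does not change under extension of scalars to $\C$.

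For $m\geq1$, apply Lemma~\ref{lemma 2.13}. For $\zeta=\zeta_{p^m}^k$ we have $\det(\Id-\theta A)$ specialised at $\theta=\zeta$ equal to $\prod_\chi(1-\zeta\lambda_\chi)$. The algebraic multiplicity of $\zeta$ as a root of $\det(\Id-\theta A)$ is therefore $\#\{\chi:\lambda_\chi=\zeta^{-1}\}$. On the other side, $\Id-\zeta A$ is diagonalized by the same basis $\{v_\chi\}$, so $\dim\ker(\Id-\zeta A)=\#\{\chi:\zeta\lambda_\chi=1\}$. The two counts coincide term by term, so $a(X_m)=b(X_m)$ and $\delta(X_m)=0$.

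The main obstacle is bookkeeping rather than mathematics, and it has two parts.

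First, the layers $X_m$ may be disconnected, as in Proposition~\ref{prop 3.2}. One should either note that Lemma~\ref{lemma 2.13} is stated for the full derived graph, or observe that $X_m$ is a disjoint union of isomorphic components. In the latter case $a$ and $b$ are both additive over components, and the defect vanishes for the union exactly when it vanishes for each component.

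Second, the dimension in Lemma~\ref{lemma 2.13} is taken over $\Q(\zeta_{p^m}^k)$. This needs a remark that kernel dimension is invariant under field extension to $\C$, where the diagonalization lives.

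Alternatively, one can bypass Lemma~\ref{lemma 2.13} entirely by noting that the adjacency matrix of $X_m$ is itself the adjacency matrix of a Cayley graph of the abelian group $G\times\Z/p^m\Z$ with generating set $\{(s,1):s\in S\}$. Connectivity is not needed for the diagonalization argument, so the base-case argument above applies verbatim to each layer.
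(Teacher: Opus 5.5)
Your proposal is correct and follows the same route as the paper: you diagonalize $A$ using the characters of the abelian group $G$, so every eigenvalue has equal algebraic and geometric multiplicity, and then you match the two sums in Lemma~\ref{lemma 2.13} term by term. Your direct count $\#\{\chi:\lambda_\chi=\zeta^{-1}\}$ for both terms is slightly cleaner than the paper's detour through the symmetry $\lambda\leftrightarrow\lambda^{-1}$ of the spectrum, and your closing remark that $X_m$ is the Cayley graph of $G\times\Z/p^m\Z$ with respect to $\{(s,1):s\in S\}$ gives a valid self-contained alternative that avoids Lemma~\ref{lemma 2.13} entirely.
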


\begin{proof}
Since $G$ is a finite abelian group, every irreducible representation of $G$ is 1-dimensional.
In particular, every eigenvalue of the adjacency matrix $A=A_X$ occurs with the same algebraic and geometric multiplicity.
If $\lambda$ is an eigenvalue of $A$ and a root of unity then the same is true for $\lambda^{-1}$.
Indeed, this is because the characteristic polynomial $\Char_A(x)$ is a polynomial with real coefficients and the algebraic multiplicities of $\lambda$ and $\lambda^{-1}$ are the same.
Set $m(\lambda)$ to be the multiplicity of $\lambda$ as a root of $\det(\Id-\theta A)$.
It follows that 
\[
m(\lambda)=a_{\lambda^{-1}}(X)=b_{\lambda^{-1}}(X)= \dim(\ker(\Id-\lambda^{-1}A)),
\]
which implies $\delta(X_m)=0$ for all $m$.
\end{proof}

\appendix

\section{SAGE Code}
\label{sage code}

Let $X$ be the Cayley graph of $D_{2n}$.
The following code was used to compute examples and develop conjectures regarding  $\mu_p(\mathcal{BF}(X_\infty))$, $\lambda_p(\mathcal{BF}(X_\infty))$, and $\delta(X_m)$, where $X_m$ is the $m$-th layer of the constant $\Z_p$ tower with base graph $X$.
This code outputs a table with columns: $n$, Size of $D_{2n}$, $\det(\mathcal{BF}(X))$, $\det(\Id-(T+1)A)$, constant term of $\det(\Id-(T+1)A)$, and the prime divisors of the constant term. 
There are several commented sections in the code block that compute and display additional information such as $\det(A_X)$, $\Char_A(x)$, $a(X)$, and $\det(\Id -\theta A)$.

In general, Cayley graphs depend on both the chosen generators for a group and the action of those generators.  Our code defines Cayley graphs of $D_{2n}$ on two generators, $\tau$ and $\sigma$ such that 
\[
D_{2n} =  \langle \tau, \sigma \mid \sigma^n = \tau^2=e, \tau\sigma\tau = \sigma^{-1}\rangle
\]
where generators act on the right.

\begin{verbatim}
#Choose m so that the largest Dihedral group considered is of size 2*m. Counter n
runs between 2...m and 2*n is the size of the dihedral group. 
m=30

#initialize data table.
rows = [[`n',`size of Dihedral Group', `Determinant of BF', `Determinant (1-(T+1)A)',
`Constant term', `Primes that divide constant term']]

for n in [2..m]:
    
    #Construct the adjacency matrix of the Cayley graph.
    D=DihedralGroup(n)
    G=D.gens()
    U=D.cayley_graph(generators=G)
    Adj=U.adjacency_matrix()
   
    #Define an identity matrix of correct size and find the Bowen-Franks matrix
    at the first level.
    I=(matrix.identity(2*n))
    BF=I-Adj
    
    #-----------------------#
    #Un-comment the this section to print out information on base graph: 
    characteristic polynomial, adjacency matrix, the characteristic polynomial 
    of the adjacency matrix, and the a(x) invariant.
    
    #f=Adj.charpoly()
    #ffactored = f.factor()
    #print(`The Cayley graph of the Dihedral group of size 2*', n, '\n )
    #print(`And characteristic polynomial', ffactored)
    #print (`And has adjacency matrix:\n',Adj)
    #print(`\n')
    #Find the multiplicity of (\lambda - 1) in the charcteristic polynomial,
    i.e. a(x), the algebraic multiplicity of 1 as an eigenvalue of the adjacency
    matrix.
    #ES=Adj.eigenspaces_right()
    #for i in [0..len(ES)-1]:
        #if 1 in Adj.eigenspaces_right()[i]:
            #dim=Adj.eigenspaces_right()[i][1].dimension()
            #print(`The dimension for lambda = 1 is',dim, '(this is a(x))')
            #print(`\n')
            
   #-----------------------#  
    #Un-comment the this section to print out information on I-H*Adj, the roots 
    of the characteristic polynomial ad their multiplicity
    
    #Define a polynomial ring in variable H and find the characteristic polynomial of
    I-H*Adj.  Here H denotes \theta the topological generator of Z_p. Factor 
    and print roots over CC.
    #A.<H>= PolynomialRing(ZZ)
    #M= I-H*Adj 
    #q= M.determinant()
    #qfactored= q.factor()
    #qroots = q.change_ring(CC).roots()
   
    #Print roots with multiplicity of the characteristic polynomial of I-HA
    #print(`The det(I-HA), where H is a variable is:', q, 'with roots, multiplicity:',
    qroots)
    #print(`\n')
   
    #-----------------------#
   
    #Subsitute T=H-1 and find characteristic polynomial of I-T*Adj.  Then finds the 
    prime divisors of the constant term. 

    B.<T>= PolynomialRing(ZZ)
    N= I-(T+1)*Adj #roots of this poly
    r= N.determinant()

    if r(0) == 0:
        primediv = [] 
        
    else:
        primediv = r(0).prime_divisors()
    
    #Note: Constant term should always equal the determinant of the Bowen-Franks group
    of the base graph.
    rows.append([n,2*n,BF.determinant(),r,r(0),primediv])
    
table(rows)
\end{verbatim}

\bibliographystyle{amsalpha}
\bibliography{references}

\end{document}